\newtheorem{theo}{Theorem}[section]
\newtheorem{lemm}[theo]{Lemma}
\numberwithin{equation}{section}
\begin{document}

\title[acoustic-elastic interaction problem]{An adaptive finite element PML
method for the acoustic-elastic interaction in three dimensions}

\author{Xue Jiang}
\address{School of Science, Beijing University of Posts and Telecommunications,
Beijing 100876, China.}
\email{jxue@lsec.cc.ac.cn}

\author{Peijun Li}
\address{Department of Mathematics, Purdue University, West Lafayette, IN 47907,
USA.}
\email{lipeijun@math.purdue.edu}

\thanks{The research of XJ was supported in part by China NSF grant 11401040 and
by the Fundamental Research Funds for the Central Universities 24820152015RC17.
The research of PL was supported in part by the NSF grant DMS-1151308.}

\subjclass[2010]{65N30, 78M10, 35Q99}

\keywords{acoustic-elastic interaction, perfectly matched layer,
adaptive finite element method, transparent boundary condition}

\begin{abstract}
Consider the scattering of a time-harmonic acoustic incident wave by a bounded, 
penetrable, and isotropic elastic solid, which is immersed in a homogeneous
compressible air or fluid. The paper concerns the numerical solution for such an
acoustic-elastic interaction problem in three dimensions. An exact transparent
boundary condition (TBC) is developed to reduce the problem equivalently into a
boundary value problem in a bounded domain. The perfectly matched layer (PML)
technique is adopted to truncate the unbounded physical domain into a bounded
computational domain. The well-posedness and exponential convergence of the
solution are established for the truncated PML problem by using a PML equivalent
TBC. An a posteriori error estimate based adaptive finite element method
is developed to solve the scattering problem. Numerical experiments are included
to demonstrate the competitive behavior of the proposed method.
\end{abstract}

\maketitle

\section{Introduction}

Consider the incidence of a time-harmonic acoustic wave onto a bounded,
penetrable, and isotropic elastic solid, which is immersed in a homogeneous and
compressible air or fluid. Due to the interaction between the incident wave and
the solid obstacle, an elastic wave is excited inside the solid region,
while the acoustic incident wave is scattered in the air/fluid region. This
scattering phenomenon leads to an air/fluid-solid interaction problem. The
surface of the elastic solid divides the whole three-dimensional space into a
bounded interior domain and an open exterior domain where the elastic wave and
the acoustic wave occupies, respectively. The two waves are coupled together on
the surface via the interface conditions: continuity of the normal
component of velocity and the continuity of traction. The acoustic-elastic
interaction problems have received ever-increasing attention due to their
significant applications in geophysics and seismology \cite{h-94,
h-89}. These problems have been examined mathematically by using either
variational method \cite{gl-16, glz-16} or boundary integral equation method
\cite{lm-c95, hss-na17}. Many computational approaches have also been developed
to numerically solve these problems such as boundary element method
\cite{fkw-nme06, sm-jcp06} and coupling of finite and boundary element methods
\cite{ea-nme91}. 

Since the work by B\'{e}renger \cite{b-jcp94}, the perfectly matched layer
(PML) technique has been extensively studied and widely used to
simulate various wave propagation problems, which include acoustic waves
\cite{bp-mc07, cm-sjsc98, hsz-sjma03, ls-c98, ty-anm98}, elastic waves
\cite{bpt-mc10, cxz-mc, ct-g01, hsb-jasa96, jllz-2}, and electromagnetic waves
\cite{bw-sjna05, cw-motl94}. The PML is to surround the domain of interest by a
layer of finite thickness fictitious material which absorbs all the waves coming
from inside the computational domain. It has been proven to be an effective
approach to truncated open domains in the wave computation. Combined with the
PML technique, the adaptive finite element method (FEM) has recently been
developed to solve the diffraction grating problems \cite{blw-mc10, cw-sjna03,
jllz-1} and the obstacle scattering problems \cite{cc-mc08, cw-nm08, cl-sjna05}.
Despite the large number of work done so far, they were concerned with a single
wave propagation problem, i.e., either an acoustic wave, or an elastic wave, or
an electromagnetic wave. It is very rare to study rigorously the PML problem for
the interaction of multiple waves. 

This paper aims to investigate the adaptive finite element PML method for
solving the acoustic-elastic interaction problem. An exact transparent
boundary condition (TBC) is developed to reduce the problem equivalently into a
boundary value problem in a bounded domain. The PML technique is adopted to
truncated the unbounded physical domain into a bounded computational domain.
The variational approach is taken to incorporate naturally the interface
conditions which couple the two waves. The well-posedness and exponential
convergence of the solution are established for the truncated PML problem by
using a PML equivalent TBC. The proofs rely on the error estimate between the
two transparent boundary operators. To effciently resolve the solution with
possible singularities, the a posteriori error estimate based adaptive FEM is
developed to solve the truncated PML problem. The error estimate consists of the
PML error and the finite element discretization error, and provides a
theoretical basis for the mesh refinement. Numerical experiments are reported to
show the competitive behavior of the proposed method.

The paper is organized as follows. In section 2, we introduce the model
equations for the acoustic-elastic interaction problem. In section 3,
we present the PML formulation and prove the well-posedness and convergence of
the solution for the truncated PML problem. In section 4, we discuss the
numerical implementation and show some numerical experiments. The paper is
concluded with some general remarks in section 5.

\section{Problem formulation}

In this section, we introduce the model equations for acoustic and elastic
waves, and present an interface problem for the acoustic-elastic interaction. In
addition, an exact transparent boundary condition is introduced to reformulate
the scattering problem into an boundary value problem in an bounded domain.

\subsection{Problem geometry}

Consider an acoustic plane wave incident on a bounded elastic solid which
is immersed in a homogeneous compressible air/fluid in three dimensions. The
problem geometry is shown in Figure \ref{fig:geo}. Due to the wave interaction,
an elastic wave is induced inside the solid region, while the scattered acoustic
wave is generated in the open air/fluid region. The wave propagation described
above leads to an air/fluid-solid interaction problem. The surface of the solid
divides the whole three-dimensional space into the interior domain and the
exterior domain, where the elastic wave and the acoustic wave occupies,
respectively. Let the solid $\Omega_s\subset \mathbb R^3$ be a bounded domain
with a Lipschitz boundary $\Gamma_s$. The exterior domain $\Omega_e =\mathbb
R^3\setminus\bar \Omega_s$ is assumed to be connected and filled with a
homogeneous, compressible, and inviscid air/fluid with a constant density
$\rho_a>0$. Denote by $B=\{\boldsymbol{x}=(x_1, x_2, x_3)^\top\in \mathbb R^3:
|x_j| < L_j, j=1,2,3\}$	 the rectangular box with the boundary $\partial B$,
where $L_j$ are sufficiently large such that $\bar \Omega_s\subset B$. Define
$\Omega_a = B\setminus\bar \Omega_s$. Let $\boldsymbol{n}_1$ be the unit normal
vector on $\Gamma_s$ directed from $\Omega_s$ into $\Omega_e$, and let
$\boldsymbol n_2$ be the unit outward normal vector on $\partial B$. 

\begin{figure}
\center
\includegraphics[width=0.3\textwidth]{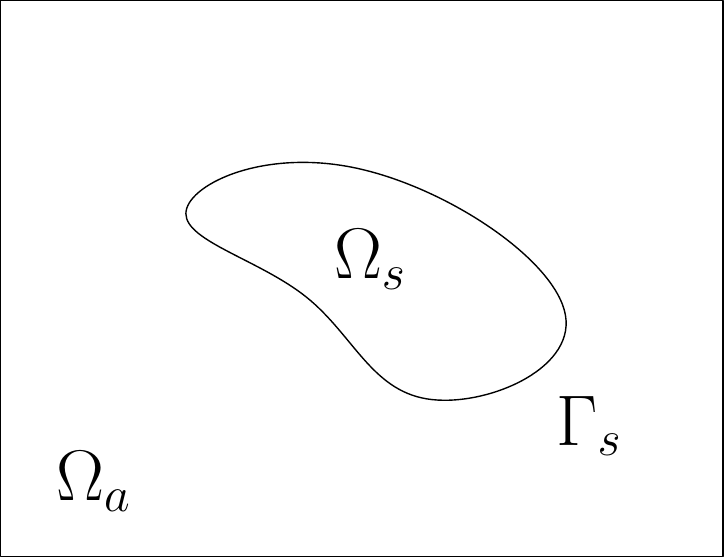}
\caption{A two-dimensional schematic of the problem geometry for the
acoustic-elastic interaction.}
\label{fig:geo}
\end{figure}

\subsection{Wave equations}

Let the elastic solid be impinged by a time-harmonic sound wave $p^{\rm
inc}$, which satisfies the three-dimensional Helmholtz equation:
\[
\Delta p^{\rm inc}+ \kappa^2 p^{\rm inc} =0\quad\text{in}~ \Omega_e, 
\]
where $\kappa=\omega/c$ is the wavenumber, $\omega>0$ is the angular frequency,
and $c$ is the speed of sound in the air/fluid. The total acoustic wave
field $p$ also satisfies the Helmholtz equation:
\begin{equation}\label{p}
\Delta p+ \kappa^2 p =0\quad\text{in}~ \Omega_e. 
\end{equation}
The total field $p$ consists of the incident field $p^{\rm inc}$ and the
scattered field $p^{\rm sc}$:
\[
 p=p^{\rm inc} + p^{\rm sc}\quad\text{in} ~ \Omega_e,
\]
where scattered field $p^{\rm sc}$ is required to satisfy the Sommerfeld
radiation condition:
\[
 \lim_{r\to\infty} r (\partial_r p^{\rm sc}-{\rm i}\kappa p^{\rm sc})=0,\quad
r=|\boldsymbol x|. 
\]

The time-harmonic elastic wave satisfies the three-dimensional Navier equation:
\begin{equation}\label{ne}
 \nabla\cdot\boldsymbol\sigma (\boldsymbol u)+\omega^2\boldsymbol
u=0\quad\text{in}~ \Omega_s,
\end{equation}
where $\boldsymbol{u}=(u_1, u_2,u_3)^\top$ is the displacement of the elastic
wave, and the stress tensor $\boldsymbol{\sigma}(\boldsymbol{u})$
is given by the generalized Hook law:
\begin{equation}\label{ghl}
\boldsymbol{\sigma}(\boldsymbol{u})=2\mu\boldsymbol{\epsilon}(\boldsymbol{u})
+\lambda{\rm tr}(\boldsymbol{\epsilon}(\boldsymbol{u}))I,\quad
\boldsymbol{\epsilon}(\boldsymbol{u})=\frac{1}{2}(\nabla\boldsymbol{u}
+\nabla\boldsymbol{u}^\top).
\end{equation}
Here $\mu(\boldsymbol x)\in L^\infty(\Omega_s), \lambda(\boldsymbol x)\in
L^\infty(\Omega_s)$ are the Lam\'{e} parameters satisfying $\mu>0, \lambda>0$,
and $\nabla\boldsymbol u$
is the displacement gradient tensor given by 
\[
 \nabla\boldsymbol{u}=\begin{bmatrix}
                       \partial_{x_1} u_1 & \partial_{x_2} u_1&\partial_{x_3}
u_1\\
                       \partial_{x_1} u_2 & \partial_{x_2} u_2&\partial_{x_3}
u_2\\
                       \partial_{x_1} u_3 & \partial_{x_2} u_3&\partial_{x_3}
u_3
                      \end{bmatrix}.
\]
Substituting \eqref{ghl} into \eqref{ne} yields 
\begin{equation}\label{une}
\nabla\cdot(\mu(\nabla\boldsymbol{u}+\nabla\boldsymbol{u}^\top))+
\nabla(\lambda\nabla\cdot\boldsymbol{u})+\omega^2\boldsymbol{u}=0
\quad\text{in}~ \Omega_s. 
\end{equation}

\subsection{Interface conditions}

To couple the acoustic wave equation and the elastic wave equation, the
kinematic interface condition is imposed to ensure the continuity of the normal
component of the velocity:
\begin{equation}\label{inc1}
\partial_{\boldsymbol n_1} p =\rho_a\omega^2
\boldsymbol{n}_1\cdot\boldsymbol{u}
\quad\text{on} ~\Gamma_s,
\end{equation}
In addition, the dynamic interface condition is required to ensure the continuity of traction:
\begin{equation}\label{inc2}
- p \boldsymbol{n}_1=\boldsymbol{\sigma}(\boldsymbol{u})\cdot\boldsymbol{n}_1
\quad\text{on} ~\Gamma_s,
\end{equation}
where $\boldsymbol\sigma(\boldsymbol u)\cdot\boldsymbol n_1$ denotes the
matrix-vector multiplication. 

\subsection{Acoustic-elastic interaction problem}

The acoustic-elastic interaction problem can be formulated into the following
coupled boundary value problem: Given $p^{\rm inc}$, to find $(p, \boldsymbol
u)$ such that 
\begin{equation}\label{bvp}
 \begin{cases}
  \Delta p+\kappa^2 p=0,\quad p=p^{\rm inc}+p^{\rm sc} &\quad\text{in} ~
\Omega_e,\\
  \nabla\cdot \boldsymbol\sigma(\boldsymbol u)+\omega^2\boldsymbol
u=0 &\quad\text{in}~\Omega_s,\\
\partial_{\boldsymbol n_1}p=\rho_a\omega^2\boldsymbol n_1\cdot\boldsymbol
u,\quad -p\boldsymbol n_1=\boldsymbol\sigma(\boldsymbol u)\cdot\boldsymbol n_1
&\quad\text{on} ~ \Gamma_s,\\
\partial_r p^{\rm sc}-{\rm i}\kappa p^{\rm
sc}=o(r^{-1})&\quad\text{as}~r\to\infty.
 \end{cases}
\end{equation}
We refer to \cite{lm-c95} for the discussion on the well-posedness of the
boundary value problem \eqref{bvp}. From now on, we assume that the
acoustic-elastic interaction problem has a unique solution. 

\subsection{Transparent boundary condition}

Given $v\in H^{1/2}(\partial B)$, we define the Dirichlet-to-Neumann (DtN)
operator $\mathscr{T}: H^{1/2}(\partial B)\to H^{-1/2}(\partial B)$ as follows:
\[
 \mathscr{T}v=\partial_{\boldsymbol n_2}u\quad \text{on} ~\partial
B,
\]
where $u$ is the solution of the exterior Dirichlet problem of the Helmholtz
equation:
\begin{equation}\label{xi}
 \begin{cases}
\Delta u+\kappa^2 u=0 &\quad\text{in} ~ \mathbb{R}^3\setminus\bar B,\\
u=v &\quad\text{on} ~ \partial B,\\
\partial_r u-{\rm i}\kappa u=o(r^{-1}) &\quad\text{as}~r\to \infty.
 \end{cases}
\end{equation}
It is well-known that the exterior problem \eqref{xi} has a unique solution
$u\in H^1_{loc}(\mathbb{R}^3\setminus\bar B)$ (cf., e.g., \cite{ck-j83}). Thus
the DtN operator $\mathscr{T}: H^{1/2}(\partial B)\to H^{-1/2}(\partial B)$ is
well-defined and is a bounded linear operator.

Using the DtN operator $\mathscr T$, we reformulate the boundary value problem
\eqref{bvp} from the open domain into the bounded domain: Given $p^{\rm inc}$,
to find $(p, \boldsymbol u)$ such that 
\begin{equation}\label{bbvp}
 \begin{cases}
\Delta p+\kappa^2 p=0 &\quad\text{in} ~ \Omega_a,\\
\nabla\cdot\boldsymbol\sigma(\boldsymbol u)+\omega^2\boldsymbol
u=0 &\quad\text{in}~\Omega_s,\\
\partial_{\boldsymbol n_1}p=\rho_a\omega^2\boldsymbol n_1\cdot\boldsymbol
u,\quad -p\boldsymbol n_1=\boldsymbol\sigma(\boldsymbol u)\cdot\boldsymbol n_1
&\quad\text{on} ~ \Gamma_s,\\
\partial_{\boldsymbol n_2} p=\mathscr{T}p+f&\quad\text{on}~\partial B,
 \end{cases}
\end{equation}
where $f=\partial_{\boldsymbol n_2} p^{\rm inc}-\mathscr{T} p^{\rm inc}$. 

To study the well-posedness of \eqref{bbvp}, we define
\[
\boldsymbol{X}:=H^1(\Omega_a)\times
H^1(\Omega_s)^3=\{\boldsymbol{\Phi}=(p,\boldsymbol{u}):p\in H^1(\Omega_a),
\boldsymbol{u}\in H^1(\Omega_s)^3\},
\]
which is endowed with the inner product:
\[
(\boldsymbol{\Phi},\boldsymbol{\Psi})_{\boldsymbol{X}}:=\int_{\Omega_a}\left(
\nabla
p\cdot\nabla \bar q + p\bar q \right){
\rm d}\boldsymbol x+\int_{\Omega_s}\left( 
\nabla\boldsymbol{u}:\nabla\bar{\boldsymbol
v}+\boldsymbol{u}\cdot\bar{\boldsymbol{v}}\right){\rm
d}\boldsymbol x
\]
for any $\boldsymbol{\Phi}=(p,\boldsymbol{u})$ and
$\boldsymbol{\Psi}=(q,\boldsymbol{v})$, where $A:B={\rm tr}(A B^\top)$ is the
Frobenius inner product of square matrices $A$ and $B$. Clearly,
$\|\cdot\|_{\boldsymbol{X}}=\sqrt{(\cdot,\cdot)_{\boldsymbol{X}}}$ is a norm on
$\boldsymbol{X}$.

Let $a: \boldsymbol{X}\times \boldsymbol{X}\to\mathbb{C}$ be the sesquilinear
form:
\begin{align}\label{asf}
 a( p,\boldsymbol u; q, \boldsymbol{v})=&\int_{\Omega_a}\left( \nabla
p\cdot\nabla \bar q-\kappa^2 p\bar q \right){\rm d}\boldsymbol
x+\int_{\Gamma_s}\rho_a\omega^2(\boldsymbol{n}_1\cdot\boldsymbol{u} )\bar
q{\rm d}s-\int_{\partial B}  (\mathscr{T}p) \bar q{\rm d}s \notag \\
 &+\int_{\Omega_s}\left(\boldsymbol\sigma(\boldsymbol
u):\nabla\bar{\boldsymbol v} -\omega^2\boldsymbol{u}\cdot\bar{\boldsymbol
v} \right){ \rm d}\boldsymbol x
 +\int_{\Gamma_s} ( p\boldsymbol{n}_1 )\cdot\bar{\boldsymbol{v}}{\rm
d}s.
\end{align}
The acoustic-elastic interaction problem \eqref{bbvp} is
equivalent to the following weak formulation: Find
$\boldsymbol{\Phi}=(p,\boldsymbol u)\in \boldsymbol{X}$ such that
\begin{equation}\label{atbc}
 a(p,\boldsymbol u; q, \boldsymbol{v})=\int_{\partial B}f \bar q{\rm d}s,\quad\forall\,
\boldsymbol{\Psi}=(q, \boldsymbol{v}) \in \boldsymbol{X}.
\end{equation}

Since we assume that the variational problem \eqref{atbc} has a unique weak
solution $(p, \boldsymbol u)\in \boldsymbol X$, the general theory in
Babu\v{s}ka and Aziz \cite[Chap. 5]{ba-73} implies that there exists a constant
$\gamma_0$ such that the following inf-sup condition is satisfied
\begin{equation}\label{infsup}
\sup_{0\ne (q,\boldsymbol{v})\in \boldsymbol{X}} \frac{|a(p,\boldsymbol u; q,
\boldsymbol{v})|}{\|(q,\boldsymbol{v})\|_{\boldsymbol{X}}} \geq
\gamma_0\|(p,\boldsymbol{u})\|_{\boldsymbol{X}} ,\quad\forall\,
(p,\boldsymbol{u}) \in \boldsymbol{X}.
\end{equation}

\section{The PML problem}

In this section, we introduce the PML formulation for the acoustic-elastic
interaction problem and establish its well-posedness. An error estimate will be
shown for the solutions between the original scattering problem and the
PML problem.

\subsection{PML formulation}

Now we turn to the introduction of an absorbing PML layer. As is shown in
Figure  \ref{fig:geo1}, the domain $\Omega_a$ is surrounded by a PML layer of
thickness $d_j$ which is denoted as $\Omega_{\rm PML}$. Define $\Omega:=
\Omega_a\cup\partial B\cup\Omega_{\rm PML}$. Let $\alpha_j(t)=1+{\rm
i}\sigma_j(t)$ be the PML function which is continuous and satisfies
\[
\sigma_j(t)=0 \quad\text{for} ~|t|<L_j \quad\text{and}\quad
\sigma_j(t)=\sigma_0\left(\frac{|t|-L_j}{d_j}\right)^m \quad\text{otherwise}.
\]
Here $\sigma_0> 0$ is a constant and $m$ is an integer. Following
\cite{cw-motl94}, we introduce the PML by the complex coordinate stretching:
\begin{equation}\label{cs}
 \tilde{x}_j=\int_0^{x_j} \alpha_j(\tau) {\rm d}\tau,\quad 1\leq j\leq 3.
\end{equation}

\begin{figure}
\center
\includegraphics[width=0.3\textwidth]{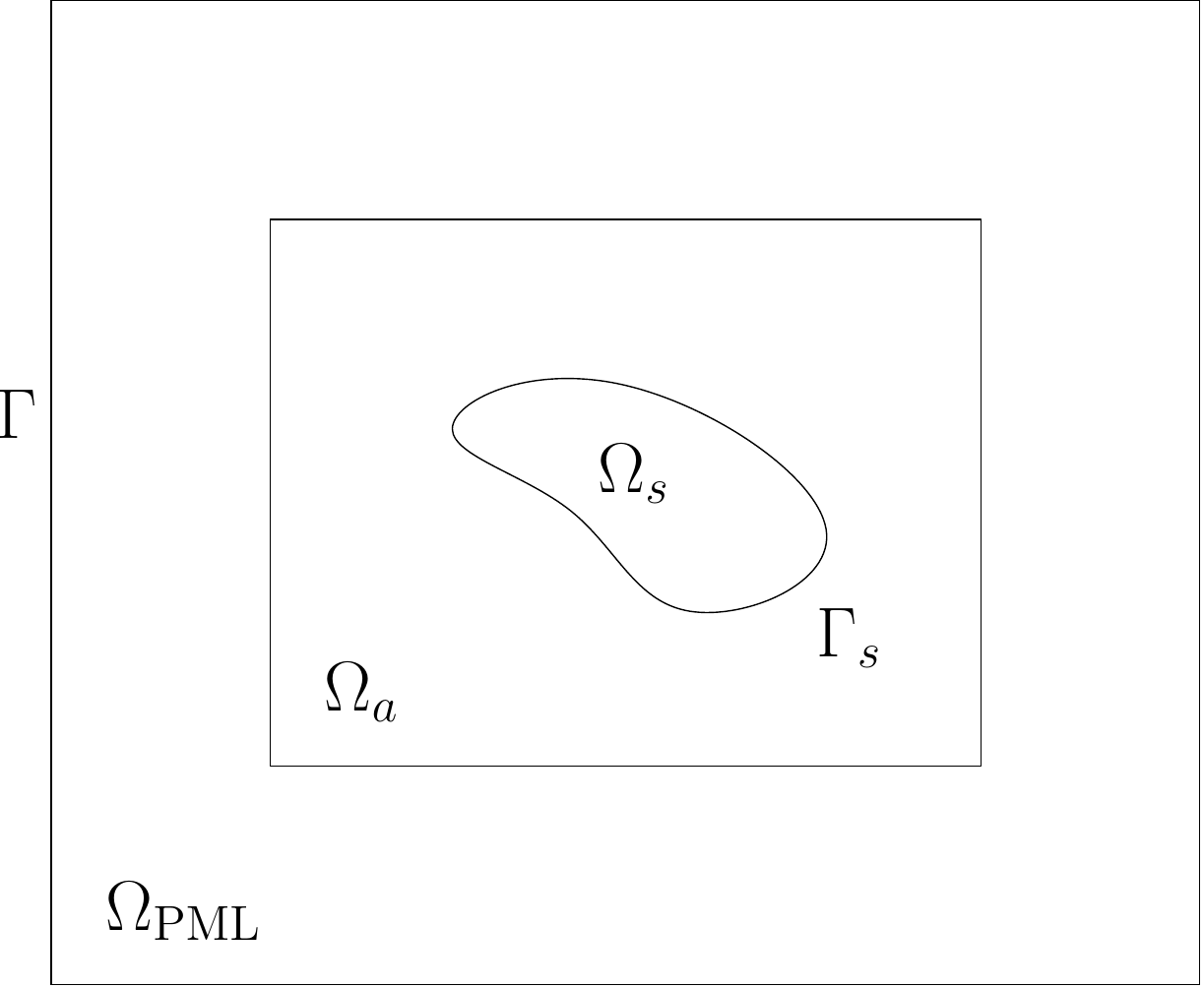}
\caption{A two-dimensional schematic of the geometry for the PML problem.}
\label{fig:geo1}
\end{figure}

Let $\tilde{\boldsymbol x}=(\tilde{x}_1,\tilde{x}_2,\tilde{x}_3)$. Introduce the
new function:
\begin{equation}\label{nf}
 \tilde{p}(\boldsymbol{x})=\begin{cases}
p^{\rm inc}(\boldsymbol{x})+(p(\tilde{\boldsymbol
x})-p^{\rm inc}(\tilde{\boldsymbol x})),\quad&
\boldsymbol{x}\in\Omega_{\rm PML},\\
p(\tilde{\boldsymbol x}) ,\quad
&\boldsymbol{x}\in\Omega_a.
 \end{cases}
\end{equation}
It is clear to note that $\tilde{p}({\boldsymbol
x})=p(\boldsymbol{x})$ in $\Omega_a$ since $\tilde{\boldsymbol
x}=\boldsymbol{x}$ in $\Omega_a$. It can be verified from \eqref{p} and
\eqref{cs} that $\tilde{p}$ satisfies
\[
 \mathscr{L}(\tilde{p}-p^{\rm inc})=0\quad\text{in}~\Omega,
\]
where the PML differential operator is defined by 
\[
  \mathscr{L}p= \nabla\cdot(A\nabla p)+ \kappa^2 b p,
\]
where 
\[
A={\rm
diag}\left(\frac{\alpha_2\alpha_3}{\alpha_1},\frac{\alpha_1\alpha_3}{\alpha_2},
\frac{\alpha_1\alpha_2}{\alpha_3}\right), \quad b=\alpha_1\alpha_2\alpha_3.
\]

It can be verified from \eqref{p} and \eqref{nf} that the outgoing wave
$\tilde{p}(\boldsymbol{x})-p^{\rm inc}(\boldsymbol{x})$
in $\Omega_{\rm PML}$ decays exponentially. Therefore, the homogeneous Dirichlet
boundary condition can be imposed on $\Gamma:=\partial\Omega_{\rm
PML}\setminus\partial B$ to truncate the PML problem. We arrive at the following
truncated PML problem: Find $(\hat p,\hat{\boldsymbol{u}})$ such that
\begin{equation}\label{pmlp}
 \begin{cases}
 \mathscr{L}\hat{p}=g &\quad\text{in} ~ \Omega,\\
 \nabla\cdot\boldsymbol\sigma(\hat{\boldsymbol
u})+\omega^2\hat{\boldsymbol{u}}=0 &\quad\text{in} ~ \Omega_s,\\
 \partial_{\boldsymbol n_1} \hat p =\rho_a\omega^2
\boldsymbol{n}_1\cdot\hat{\boldsymbol{u}},\quad -\hat p
\boldsymbol{n}_1=\boldsymbol{\sigma}(\hat{\boldsymbol{u}})\cdot\boldsymbol n_1
&\quad\text{on} ~\Gamma_s,\\
\hat{p}=p^{\rm inc} &\quad\text{on} ~ \Gamma,
 \end{cases}
\end{equation}
where
\[
 g=\begin{cases}
                 \mathscr{L} p^{\rm inc}&\quad\text{in} ~
\Omega_{\rm PML},\\
                 0&\quad\text{in} ~ \Omega_a.
                \end{cases}
\]

Define 
\[
\boldsymbol{Y}:=H^1(\Omega)\times
H^1(\Omega_s)^3=\{\boldsymbol{\Phi}=(p,\boldsymbol{u}):p\in H^1(\Omega),
\boldsymbol{u}\in H^1(\Omega_s)^3\},
\]
which is endowed with the inner product
\[
(\boldsymbol{\Phi},\boldsymbol{\Psi})_{\boldsymbol{Y}}:=\int_{\Omega}\left(
\nabla
p\cdot\nabla \bar q + p\bar q \right){
\rm d}\boldsymbol x+\int_{\Omega_s}\left( 
\nabla\boldsymbol{u}:\nabla\bar{\boldsymbol
v}+\boldsymbol{u}\cdot\bar{\boldsymbol{v}}\right){\rm
d}\boldsymbol x
\]
for any $\boldsymbol{\Phi}=(p,\boldsymbol{u})$ and
$\boldsymbol{\Psi}=(q,\boldsymbol{v})$. Obviously,
$\|\cdot\|_{\boldsymbol{Y}}=\sqrt{(\cdot,\cdot)_{\boldsymbol{Y}}}$ is a norm on
$\boldsymbol{Y}$.

The weak formulation of the truncated PML problem \eqref{pmlp} reads as
follows: Find $(\hat p,\hat{\boldsymbol{u}})\in \boldsymbol{Y}$ such that
$\hat{p}=p^{\rm inc}$ on $\Gamma$ and
\begin{equation}\label{twp}
 b(\hat p,\hat{\boldsymbol{u}}; q, \boldsymbol{v})=-\int_{\Omega}g\bar q{\rm
d}\boldsymbol{x},\quad\forall\, (q, \boldsymbol{v}) \in \boldsymbol{Y}_{0},
\end{equation}
where
$\boldsymbol{Y}_{0}=\{\boldsymbol{\Phi}=(p,\boldsymbol{u})\in\boldsymbol{Y}:
p=0~\text{on}~\Gamma\}$, and the sesquilinear form $b: \boldsymbol{Y}\times
\boldsymbol{Y}\to\mathbb{C}$ is defined by
\begin{align*}
 b( p,\boldsymbol u; q, \boldsymbol{v})&=\int_\Omega\left( A\nabla p\cdot\nabla
\bar q-\kappa^2 b p\bar q \right){\rm d}\boldsymbol
x+\int_{\Gamma_s}\rho_a\omega^2(\boldsymbol{n}_1\cdot\boldsymbol{u} )\bar q{\rm
d}s \notag \\
&+\int_{\Omega_s}\left(\boldsymbol\sigma(\boldsymbol
u):\nabla\bar{\boldsymbol v}-\omega^2\boldsymbol{u} \cdot\bar { \boldsymbol { v
}}\right){\rm d}\boldsymbol x +\int_{\Gamma_s} (p\boldsymbol{n}_1
)\cdot\bar{\boldsymbol{v}}{\rm d}s. 
\end{align*}

We will reformulate the variational problem \eqref{twp} imposed in the
domain $\Omega\cup\bar\Omega_s$ into an equivalent variational formulation in
the domain $B=\Omega_a\cup\bar\Omega_s$, and discuss the existence and
uniqueness of the weak solution to the equivalent weak formulation. To do so, we
need to introduce the transparent boundary condition for the truncated PML
problem.

\subsection{Transparent boundary condition of the PML problem}

We start by introducing the approximate DtN operator $\mathscr{T}^{\rm PML}:
H^{1/2}(\partial B)\to H^{-1/2}(\partial B)$ associated with
the PML problem. 

Given $\psi\in H^{1/2}(\partial B)$, let $\mathscr{T}^{\rm
PML}\psi=\partial_{\boldsymbol n_2}\phi$ on $\partial B$, where $\phi\in
H^1(\Omega_{\rm PML})$ is the solution of the following boundary value problem
in the PML layer:
\[
 \begin{cases}
\nabla\cdot(A\nabla\phi)+\kappa^2 b\phi=0 &\quad\text{in} ~ \Omega_{\rm PML},\\
\phi=\psi &\quad\text{on} ~ \partial B,\\
\phi=0 &\quad\text{on} ~ \Gamma.
 \end{cases}
\]
The PML problem \eqref{pmlp} can be reduced to the following boundary value
problem: Find $(p^{\rm PML}, \boldsymbol{u}^{\rm PML})$ such that
\begin{equation}\label{cbvp}
 \begin{cases}
 \Delta p^{\rm PML}+\kappa^2 p^{\rm PML}=0 &\quad\text{in} ~ \Omega_a,\\
 \nabla\cdot\boldsymbol\sigma({\boldsymbol u}^{\rm
PML})+\omega^2\boldsymbol{u}^{\rm PML}=0 &\quad\text{in} ~ \Omega_s,\\
 \partial_{\boldsymbol n_1} p^{\rm PML} =\rho_a\omega^2
\boldsymbol{n}_1\cdot\boldsymbol{u}^{\rm PML},\quad -p^{\rm PML}
\boldsymbol{n}_1=\boldsymbol{\sigma}(\boldsymbol{u}^{\rm PML})\cdot\boldsymbol
n_1 &\quad\text{on} ~\Gamma_s,\\
\partial_{\boldsymbol n_2} p^{\rm PML}=\mathscr{T}^{\rm PML}p^{\rm PML} + f^{\rm
PML}&\quad\text{on} ~ \partial B,
 \end{cases}
\end{equation}
where $f^{\rm PML}=\partial_{\boldsymbol n_2} p^{\rm inc}-{\mathscr{T}}^{\rm
PML}p^{\rm inc}$.

The weak formulation of \eqref{cbvp} is to find $(p^{\rm
PML},\boldsymbol{u}^{\rm PML})\in \boldsymbol{X}$ such that
\begin{equation}\label{cwp}
 a^{\rm PML}(p^{\rm PML},\boldsymbol u^{\rm PML}; q, \boldsymbol{v})
 =\int_{\partial B}f^{\rm PML}\bar q{\rm d}s ,\quad\forall\,
(q, \boldsymbol{v}) \in \boldsymbol{X},
\end{equation}
where the sesquilinear form $a^{\rm PML}: \boldsymbol X\times
\boldsymbol X\to\mathbb{C}$ is defined by
\begin{align}\label{csf}
a^{\rm PML}( p,\boldsymbol u; q, \boldsymbol{v})=&\int_{\Omega_a}\left( \nabla
p\cdot\nabla \bar q-\kappa^2 p\bar q \right){\rm d}\boldsymbol
x+\int_{\Gamma_s}\rho_a\omega^2(\boldsymbol{n}_1\cdot\boldsymbol{u} )\bar q{\rm
d}s -\int_{\partial B}(\mathscr{T}^{\rm PML}p) \bar q{\rm d}s\notag\\
 &+\int_{\Omega_s}\left(\boldsymbol\sigma(\boldsymbol u):\nabla\bar{\boldsymbol
v}-\omega^2\boldsymbol{u} \cdot\bar { \boldsymbol { v}} \right) { \rm
d}\boldsymbol x  +\int_{\Gamma_s} (  p\boldsymbol{n}_1
)\cdot\bar{\boldsymbol{v}}{\rm d}s.
\end{align}

The following lemma establishes the relationship between the variational
problem \eqref{cwp} and the weak formulation \eqref{twp}. The proof is
straightforward based on our constructions of the transparent boundary
conditions for the PML problem. The details of the proof is omitted for
simplicity.

\begin{lemm}
Any solution $\hat p$ of the variational problem \eqref{twp} restricted to
$\Omega_a$ is a solution of the variational \eqref{cwp}; conversely,
any solution $p^{\rm PML}$ of the variational problem \eqref{cwp} can be
uniquely extended to the whole domain to be a solution $\hat{p}$ of the
variational problem \eqref{twp} in $\Omega$.
\end{lemm}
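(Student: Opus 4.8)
The plan is to prove the equivalence in two directions, exploiting the fact that the PML DtN operator $\mathscr{T}^{\rm PML}$ is \emph{defined} precisely so that the interior traces of \eqref{twp} and \eqref{cwp} match. First I would establish the forward direction. Suppose $\hat p$ solves \eqref{twp} in $\Omega = \Omega_a \cup \partial B \cup \Omega_{\rm PML}$. The key observation is that $\phi := \hat p - p^{\rm inc}$ restricted to $\Omega_{\rm PML}$ satisfies $\nabla\cdot(A\nabla\phi)+\kappa^2 b\,\phi = 0$ in $\Omega_{\rm PML}$ (since $\mathscr{L}\hat p = g$ and $g = \mathscr{L}p^{\rm inc}$ there), together with $\phi = 0$ on $\Gamma$ (from $\hat p = p^{\rm inc}$ on $\Gamma$) and $\phi = \hat p|_{\partial B} - p^{\rm inc}$ on $\partial B$. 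By the definition of the PML solution operator, this forces $\partial_{\boldsymbol n_2}\phi = \mathscr{T}^{\rm PML}(\hat p|_{\partial B}) - \mathscr{T}^{\rm PML}p^{\rm inc}$, i.e.\ $\partial_{\boldsymbol n_2}\hat p = \mathscr{T}^{\rm PML}(\hat p|_{\partial B}) + f^{\rm PML}$ on $\partial B$. Then I would take test functions $(q,\boldsymbol v)\in \boldsymbol Y_0$ supported in $\bar\Omega_a \cup \bar\Omega_s$ and integrate by parts in the $\Omega$-integral of $b(\cdot)$, splitting $\Omega$ into $\Omega_a$ and $\Omega_{\rm PML}$; the boundary term that survives on $\partial B$ is exactly $\int_{\partial B}(\partial_{\boldsymbol n_2}\hat p)\bar q\,{\rm d}s$, which by the trace identity just derived reproduces the $\mathscr{T}^{\rm PML}$ term and the $f^{\rm PML}$ right-hand side of \eqref{cwp}. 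This shows $\hat p|_{\Omega_a}$ (together with the unchanged $\hat{\boldsymbol u}$) solves \eqref{cwp}.

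For the converse direction I would start from a solution $p^{\rm PML}$ of \eqref{cwp} on $\Omega_a$ and extend it into $\Omega_{\rm PML}$ by setting $\hat p := p^{\rm inc} + \phi$ there, where $\phi\in H^1(\Omega_{\rm PML})$ is the unique solution of the PML boundary value problem with Dirichlet data $\phi = p^{\rm PML}|_{\partial B} - p^{\rm inc}$ on $\partial B$ and $\phi = 0$ on $\Gamma$ (and $\hat p := p^{\rm PML}$ on $\Omega_a$). Uniqueness and well-posedness of this extension come from the exponential decay of the outgoing PML field already noted before \eqref{pmlp}. By construction $\hat p = p^{\rm inc}$ on $\Gamma$, and one checks $H^1$-conformity across $\partial B$: the two pieces share the same trace $p^{\rm PML}|_{\partial B}$, so $\hat p\in H^1(\Omega)$. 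The Neumann data of the two pieces match on $\partial B$ because $\partial_{\boldsymbol n_2}\hat p|_{\rm PML} = \mathscr{T}^{\rm PML}(p^{\rm PML}|_{\partial B}) + f^{\rm PML}$ by definition of $\mathscr{T}^{\rm PML}$, and this equals $\partial_{\boldsymbol n_2}p^{\rm PML}$ by the boundary condition in \eqref{cbvp}. Reversing the integration-by-parts computation then shows that $(\hat p,\hat{\boldsymbol u})$ satisfies the full variational identity \eqref{twp} against all $(q,\boldsymbol v)\in \boldsymbol Y_0$.

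The main technical point—and the only step requiring genuine care—is the integration-by-parts bookkeeping on the interface $\partial B$, where one must confirm that no \emph{jump} term in the normal derivative is introduced. Concretely, writing $b(\hat p,\hat{\boldsymbol u};q,\boldsymbol v)$ as the sum of an $\Omega_a$-part and an $\Omega_{\rm PML}$-part, Green's formula produces a boundary contribution $\int_{\partial B}\bigl(\partial_{\boldsymbol n_2}\hat p|_{\Omega_a} - \partial_{\boldsymbol n_2}\hat p|_{\Omega_{\rm PML}}\bigr)\bar q\,{\rm d}s$ (with signs fixed by the outward normal orientation), and the equivalence rests on this jump vanishing, which is exactly the matching Neumann condition enforced by $\mathscr{T}^{\rm PML}$. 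Everything else is routine: the elastic block of the sesquilinear form, the interface integrals on $\Gamma_s$, and the term $\int_{\Gamma_s}\rho_a\omega^2(\boldsymbol n_1\cdot\boldsymbol u)\bar q\,{\rm d}s$ are identical in $b(\cdot)$ and $a^{\rm PML}(\cdot)$ and carry over verbatim, so no further argument is needed for them. This is why, as the authors note, the proof is straightforward once the transparent boundary operator $\mathscr{T}^{\rm PML}$ has been constructed, and the detailed calculation may reasonably be omitted.
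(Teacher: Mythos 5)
Your overall route is exactly the one the authors intend (their own proof is omitted as ``straightforward''): extract $\mathscr{L}(\hat p-p^{\rm inc})=0$ in $\Omega_{\rm PML}$ from interior test functions, identify $\partial_{\boldsymbol n_2}\hat p=\mathscr{T}^{\rm PML}\hat p+f^{\rm PML}$ on $\partial B$ from the definition of $\mathscr{T}^{\rm PML}$, and, conversely, extend $p^{\rm PML}$ into the layer by solving the PML boundary value problem and reverse the integration by parts. Two side remarks: the well-posedness and uniqueness of the layer problem that your extension relies on is an \emph{assumption} the paper makes (implicitly here, explicitly for \eqref{wf-c} in Section 4), not a consequence of the exponential-decay remark preceding \eqref{pmlp}; and Green's formula produces the conormal derivative $A\nabla\hat p\cdot\boldsymbol n_2$, which coincides with $\partial_{\boldsymbol n_2}\hat p$ on $\partial B$ only because $\sigma_j$ vanishes at $|t|=L_j$, so that $A=I$ and $b=1$ there --- worth a sentence in a careful write-up.

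There is, however, one step in your forward direction that fails as written: you test \eqref{twp} with $(q,\boldsymbol v)\in\boldsymbol Y_0$ ``supported in $\bar\Omega_a\cup\bar\Omega_s$''. Such a $q$ vanishes a.e.\ in $\Omega_{\rm PML}$, hence (being in $H^1(\Omega)$) its trace on $\partial B$ is zero; the boundary term $\int_{\partial B}(\partial_{\boldsymbol n_2}\hat p)\bar q\,{\rm d}s$ that you claim ``survives'' is identically zero for these test functions, and you would only verify \eqref{cwp} against $q\in H^1(\Omega_a)$ with $q|_{\partial B}=0$ --- strictly weaker than the lemma, since the terms $\int_{\partial B}(\mathscr{T}^{\rm PML}p)\bar q\,{\rm d}s$ and $\int_{\partial B}f^{\rm PML}\bar q\,{\rm d}s$ are then invisible. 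The fix is standard and keeps your mechanism intact: given an arbitrary $q\in H^1(\Omega_a)$, choose \emph{any} extension $\tilde q\in H^1(\Omega)$ with $\tilde q=0$ on $\Gamma$ and $\tilde q|_{\Omega_a}=q$ (e.g.\ a cutoff of a Stein extension), test \eqref{twp} with $(\tilde q,\boldsymbol v)$, and integrate by parts only over $\Omega_{\rm PML}$, using $\mathscr{L}\hat p=g$ there and minding that the outward normal of $\Omega_{\rm PML}$ on $\partial B$ is $-\boldsymbol n_2$; the surviving term $-\int_{\partial B}\bigl(A\nabla\hat p|_{\Omega_{\rm PML}}\cdot\boldsymbol n_2\bigr)\bar q\,{\rm d}s$ then equals $-\int_{\partial B}\bigl(\mathscr{T}^{\rm PML}\hat p+f^{\rm PML}\bigr)\bar q\,{\rm d}s$ by the trace identity you derived, and \eqref{cwp} follows for all $(q,\boldsymbol v)\in\boldsymbol X$. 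Your converse direction needs no such repair, since there the test functions $(q,\boldsymbol v)\in\boldsymbol Y_0$ already carry arbitrary traces on $\partial B$; with the above correction the whole argument is complete and matches the paper's intended proof.
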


\subsection{Convergence of the PML solution}

Now we turn to estimating the error between $(p^{\rm PML}, \boldsymbol u^{\rm
PML})$ and $(p, \boldsymbol u)$. The key is to estimate the error of the
boundary operators $\mathscr{T}^{\rm PML}$ and $\mathscr{T}$.

\begin{lemm}\label{boe}
For any $p, q\in H^1(\Omega_a)$, there exists a constant $C>0$ such that 
\[
|\langle (\mathscr{T}^{\rm PML}-\mathscr{T})p, q\rangle_{\partial B}| \leq
C\alpha_0^3(1+\kappa L)^3e^{-\kappa\gamma_1 \sigma} \|p\|_{L^2(\partial B)}
\|q\|_{L^2(\partial B)},
\]
where $L=\max_{1\leq j\leq 3}L_j,
\alpha_0=\max_{\boldsymbol{x}\in\Gamma}(|\alpha_1(x_1)|,|\alpha_2(x_2)|,
|\alpha_3(x_3)|)$, 
\[
\gamma_1:=\frac{\min_{1\le
j\le3}d_j}{\left(\sum_{j=1}^3(2L_j+d_j)^2\right)^{1/2}},
\]
and $\sigma>0$ is a sufficiently large constant such that $\gamma_1\sigma\ge 1$.
\end{lemm}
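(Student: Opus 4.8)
The plan is to transfer the operator difference to a trace of the analytically continued exterior field and then estimate that trace through the exponential decay produced by the complex stretching \eqref{cs}. Let $u$ be the solution of the exterior Dirichlet problem \eqref{xi} with boundary data $p$, so that $\mathscr{T}p=\partial_{\boldsymbol n_2}u$ on $\partial B$, and introduce the continued field $\check u(\boldsymbol x):=u(\tilde{\boldsymbol x})$ for $\boldsymbol x\in\Omega_{\rm PML}$. Since $u$ is analytic in $\mathbb R^3\setminus\bar B$ and $\tilde{\boldsymbol x}=\boldsymbol x$ with $\alpha_j=1$ on $\partial B$, the field $\check u$ satisfies $\mathscr L\check u=0$ in $\Omega_{\rm PML}$, agrees with $p$ on $\partial B$, and has co-normal derivative $A\nabla\check u\cdot\boldsymbol n_2=\mathscr{T}p$ there. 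If $\phi$ denotes the PML layer solution defining $\mathscr{T}^{\rm PML}$ (with $\phi=p$ on $\partial B$ and $\phi=0$ on $\Gamma$), then $w:=\phi-\check u$ solves $\mathscr L w=0$, $w=0$ on $\partial B$, $w=-\check u|_\Gamma$ on $\Gamma$, and $(\mathscr{T}^{\rm PML}-\mathscr{T})p=A\nabla w\cdot\boldsymbol n_2$ on $\partial B$. Thus everything is driven by the small Dirichlet datum $\check u|_\Gamma$.

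The analytic core is a lower bound for the imaginary part of the stretched distance $\rho=\big(\sum_{j=1}^3(\tilde x_j-y_j)^2\big)^{1/2}$ appearing in the free-space kernel $e^{{\rm i}\kappa\rho}/(4\pi\rho)$ used to represent $u$. Writing $\tilde x_j-y_j=(x_j-y_j)+{\rm i}\hat\sigma_j(x_j)$ with $\hat\sigma_j(x_j)=\int_0^{x_j}\sigma_j(\tau){\rm d}\tau$, one computes ${\rm Im}(\rho^2)=2\sum_{j}(x_j-y_j)\hat\sigma_j(x_j)$; because $\hat\sigma_j$ has the sign of $x_j$ while $|y_j|\le L_j$ for $\boldsymbol y\in\partial B$, every summand is nonnegative. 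For $\boldsymbol x\in\Gamma$ at least one coordinate is fully absorbed, which bounds ${\rm Im}(\rho^2)$ below by a multiple of $(\min_j d_j)\sigma$; together with ${\rm Im}\,\rho={\rm Im}(\rho^2)/(2\,{\rm Re}\,\rho)$ and the geometric bound ${\rm Re}\,\rho\lesssim(\sum_j(2L_j+d_j)^2)^{1/2}$ this yields ${\rm Im}\,\rho\ge\gamma_1\sigma$, hence $|e^{{\rm i}\kappa\rho}|\le e^{-\kappa\gamma_1\sigma}$ on $\Gamma$. Inserting the Green representation of $u$, differentiating the kernel (each spatial or $\kappa$ derivative of the kernel and of the DtN datum contributing a factor $1+\kappa L$, each stretched gradient a factor $\alpha_0$), and using that $u$ is smooth away from $\partial B$ so that its Cauchy data and their complexifications are controlled by $\|p\|_{L^2(\partial B)}$, I obtain a bound of the form $\|\check u\|_{L^2(\Gamma)}\le C\,\alpha_0^{k_1}(1+\kappa L)^{\ell_1}e^{-\kappa\gamma_1\sigma}\|p\|_{L^2(\partial B)}$.

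To pass from a trace bound to the stated $L^2$--$L^2$ operator estimate, I pair with $q$ by duality. Let $\phi_q$ be the layer solution with data $q$ on $\partial B$ and $0$ on $\Gamma$. Applying Green's second identity for $\mathscr L$ to $w$ and $\phi_q$ on $\Omega_{\rm PML}$, the volume terms cancel, the $\partial B$ contribution reproduces $\langle(\mathscr{T}^{\rm PML}-\mathscr{T})p,q\rangle_{\partial B}$ since $w=0$ and $A=I$ there, and the $\Gamma$ contribution collapses because $\phi_q=0$ on $\Gamma$, leaving
\[
\langle(\mathscr{T}^{\rm PML}-\mathscr{T})p,\,q\rangle_{\partial B}=\int_\Gamma \check u|_\Gamma\,\big(A\nabla\phi_q\cdot\boldsymbol n\big)\,{\rm d}s.
\]
By Cauchy--Schwarz this is at most $\|\check u\|_{L^2(\Gamma)}\,\|A\nabla\phi_q\cdot\boldsymbol n\|_{L^2(\Gamma)}$. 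The first factor is the exponentially small quantity above; for the second I use the well-posedness of the layer problem together with interior elliptic regularity (the coefficients $A,b$ are smooth inside $\Omega_{\rm PML}$, so $\phi_q$ is smooth near $\Gamma$) to bound $\|A\nabla\phi_q\cdot\boldsymbol n\|_{L^2(\Gamma)}\le C\,\alpha_0^{k_2}(1+\kappa L)^{\ell_2}\|q\|_{L^2(\partial B)}$, where $k_1+k_2=\ell_1+\ell_2=3$. Multiplying the two factors yields the claim.

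I expect the main obstacle to be the geometric decay estimate. Making $\check u=u(\tilde{\boldsymbol x})$ legitimate requires that $\tilde{\boldsymbol x}$ remain in the domain of analyticity of $u$ and that the branch of $\rho$ be chosen so that ${\rm Im}\,\rho\ge 0$ throughout $\Omega_{\rm PML}$; the box geometry, with its edges and corners, is precisely what forces the delicate constant $\gamma_1$, and verifying the sign conditions $(x_j-y_j)\hat\sigma_j(x_j)\ge0$ uniformly---and that on $\Gamma$ some coordinate is genuinely absorbed---is the crux. The secondary difficulty is tracking the explicit powers of $\alpha_0$ and $1+\kappa L$ while upgrading the energy bounds to the $L^2$ norms on $\partial B$ demanded by the statement, which is where the smoothing of the continued solution away from $\partial B$ must be used quantitatively.
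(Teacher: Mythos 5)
Your argument is essentially the proof the paper itself appeals to: the paper disposes of Lemma \ref{boe} by citing \cite[Theorem 3.8]{bp-mc07}, and your outline---complex-stretching the exterior solution $\check u(\boldsymbol x)=u(\tilde{\boldsymbol x})$, establishing ${\rm Im}\,\rho\ge\gamma_1\sigma$ so that $|e^{{\rm i}\kappa\rho}|\le e^{-\kappa\gamma_1\sigma}$ on $\Gamma$, and then absorbing the exponentially small trace $\check u|_\Gamma$ through the stability of the PML-layer problem paired with $q$ via the complex-symmetric Green identity for $\mathscr L$---is exactly that argument, with the powers of $\alpha_0$ and $1+\kappa L$ arising just as in the paper's later extension estimates. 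The one point to tighten is your step ${\rm Im}\,\rho={\rm Im}(\rho^2)/(2\,{\rm Re}\,\rho)$ combined with ${\rm Re}\,\rho\lesssim\bigl(\sum_{j=1}^3(2L_j+d_j)^2\bigr)^{1/2}$: the latter bound is not uniform in $\sigma$ (for large integrated absorption ${\rm Re}\,\rho$ can grow like $\sigma^{1/2}$), so the linear lower bound ${\rm Im}\,\rho\ge\gamma_1\sigma$ requires the short case analysis on the sign and size of ${\rm Re}(\rho^2)$ carried out in the cited reference, which repairs this without altering the constant $\gamma_1$.
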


\begin{proof}
The proof can follow similar arguments as that in \cite[Theorem
3.8]{bp-mc07}. For the sake of simplicity, we do not elaborate
on the details here.
\end{proof}

\begin{theo}
 Let $\gamma_0$ be the constant in the inf-sup condition
\eqref{infsup}. If 
\[
\gamma_2:=C\alpha_0^3(1+\kappa L)^3
e^{-\kappa\gamma_1\sigma} <\gamma_0, 
\]
then the PML variational problem \eqref{cwp} has a unique weak solution $(p^{\rm
PML},\boldsymbol{u}^{\rm PML})$, which satisfies the error estimate
\begin{align}\label{ee}
 \|(p-p^{\rm PML},\boldsymbol{u}-\boldsymbol{u}^{\rm PML})\|_{\boldsymbol{X}}
\leq \gamma_2 \|p^{\rm PML}-p^{\rm inc}\|_{L^2(\partial B)},
\end{align}
where $(p,\boldsymbol{u})$ is the unique weak solution of the variational problem
\eqref{atbc}.
\end{theo}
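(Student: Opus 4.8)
The plan is to treat the PML variational problem \eqref{cwp} as a small bounded perturbation of the exact problem \eqref{atbc}, with the perturbation concentrated entirely in the boundary term on $\partial B$ that distinguishes $\mathscr{T}^{\rm PML}$ from $\mathscr{T}$. Comparing the sesquilinear forms \eqref{asf} and \eqref{csf}, every volume and interface contribution coincides, so for all $(p,\boldsymbol u),(q,\boldsymbol v)\in\boldsymbol X$ one has
\[
a(p,\boldsymbol u;q,\boldsymbol v)-a^{\rm PML}(p,\boldsymbol u;q,\boldsymbol v)=\langle(\mathscr{T}^{\rm PML}-\mathscr{T})p,q\rangle_{\partial B}.
\]
By Lemma \ref{boe} together with the trace inequality $\|q\|_{L^2(\partial B)}\le C\|q\|_{H^1(\Omega_a)}\le C\|(q,\boldsymbol v)\|_{\boldsymbol X}$, this difference is bounded by $\gamma_2\|(p,\boldsymbol u)\|_{\boldsymbol X}\|(q,\boldsymbol v)\|_{\boldsymbol X}$, after folding the trace constant into the generic $C$ defining $\gamma_2$. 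In operator terms, writing $\mathscr A,\mathscr A^{\rm PML}:\boldsymbol X\to\boldsymbol X^*$ for the bounded operators induced by $a$ and $a^{\rm PML}$, this says $\|\mathscr A-\mathscr A^{\rm PML}\|\le\gamma_2$.

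For the unique solvability of \eqref{cwp}, I would exploit that the inf-sup condition \eqref{infsup}, which holds because \eqref{atbc} is uniquely solvable, is equivalent to $\mathscr A$ being an isomorphism with $\|\mathscr A^{-1}\|\le\gamma_0^{-1}$. Since $\|\mathscr A-\mathscr A^{\rm PML}\|\le\gamma_2<\gamma_0$, we have $\|\mathscr A^{-1}(\mathscr A-\mathscr A^{\rm PML})\|<1$, so $\mathscr A^{\rm PML}=\mathscr A-(\mathscr A-\mathscr A^{\rm PML})$ is invertible by the Neumann series, yielding a unique $(p^{\rm PML},\boldsymbol u^{\rm PML})$. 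Equivalently, one may perturb the inf-sup directly: for fixed $(p,\boldsymbol u)$ and any $\varepsilon>0$ pick a near-maximizer $(q,\boldsymbol v)$ for $a$, then subtract the perturbation bound to obtain an inf-sup constant $\gamma_0-\gamma_2>0$ for $a^{\rm PML}$, the transpose condition following identically since the perturbation is symmetric in its two arguments.

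For the error estimate \eqref{ee}, the key is a Galerkin-type identity. Starting from $a(p,\boldsymbol u;q,\boldsymbol v)=\int_{\partial B}f\bar q\,\mathrm ds$ with $f=\partial_{\boldsymbol n_2}p^{\rm inc}-\mathscr{T}p^{\rm inc}$, rewriting $a(p^{\rm PML},\boldsymbol u^{\rm PML};q,\boldsymbol v)=a^{\rm PML}(p^{\rm PML},\boldsymbol u^{\rm PML};q,\boldsymbol v)+\langle(\mathscr{T}^{\rm PML}-\mathscr{T})p^{\rm PML},q\rangle_{\partial B}$, and invoking \eqref{cwp} with $f^{\rm PML}=\partial_{\boldsymbol n_2}p^{\rm inc}-\mathscr{T}^{\rm PML}p^{\rm inc}$, the incident-field contributions telescope and I expect to reach
\[
a(p-p^{\rm PML},\boldsymbol u-\boldsymbol u^{\rm PML};q,\boldsymbol v)=-\langle(\mathscr{T}^{\rm PML}-\mathscr{T})(p^{\rm PML}-p^{\rm inc}),q\rangle_{\partial B}.
\]
Applying the inf-sup condition \eqref{infsup} to the error pair and estimating the right-hand side by Lemma \ref{boe} then gives \eqref{ee}. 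The main obstacle here is bookkeeping rather than analysis: one must check that the mismatched data $f$ and $f^{\rm PML}$ recombine exactly with the form difference so that only the computable outgoing field $p^{\rm PML}-p^{\rm inc}$ on $\partial B$ survives, and that the trace constant together with the factor $\gamma_0^{-1}$ are absorbed so as to match the stated constant $\gamma_2$.
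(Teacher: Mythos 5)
Your proposal is correct and takes essentially the same route as the paper: both treat $a^{\rm PML}$ as a perturbation of $a$ by the DtN difference $\langle(\mathscr{T}^{\rm PML}-\mathscr{T})p,q\rangle_{\partial B}$, bound it via Lemma \ref{boe} and the trace inequality to obtain stability with constant $\gamma_0-\gamma_2>0$, and derive the identical telescoping identity $a(p-p^{\rm PML},\boldsymbol u-\boldsymbol u^{\rm PML};q,\boldsymbol v)=\langle(\mathscr{T}-\mathscr{T}^{\rm PML})(p^{\rm PML}-p^{\rm inc}),q\rangle_{\partial B}$ before closing with \eqref{infsup} and Lemma \ref{boe}. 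If anything, your near-maximizer/Neumann-series packaging of the inf-sup perturbation is stated more carefully than the paper's displayed chain, and your closing remark about absorbing the trace constant and the factor $\gamma_0^{-1}$ into the stated constant matches the level of bookkeeping the paper itself leaves implicit.
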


\begin{proof}
It suffices to show the coercivity of the sesquilinear form $a^{\rm PML}$
defined in \eqref{csf} in order to prove the unique solvability of the weak
problem \eqref{cwp}. Using Lemma \ref{boe}, and the assumption
$\gamma_2<\gamma_0$, we get for any $(p,\boldsymbol{u}),
(q,\boldsymbol{v})$ in $\boldsymbol{X}$ that
\begin{align*}
 |a^{\rm PML}(p,\boldsymbol{u};q, \boldsymbol{v})|&\geq |a(p,\boldsymbol{u};q, \boldsymbol{v})|
 -\langle (\mathscr{T}^{\rm PML}-\mathscr{T})p, q\rangle_{\partial B}|\\
&\geq|a(p,\boldsymbol{u};q, \boldsymbol{v})|-\gamma_2\|p\|_{H^1(\Omega_a)}
\|q\|_{H^1(\Omega_a)}\\
&\geq\bigl(\gamma_0-\gamma_2\bigr)\|(p,\boldsymbol{u})\|_{\boldsymbol{X}}
\|(q,\boldsymbol{v})\|_{\boldsymbol{X}}.
\end{align*}
It remains to show the error estimate \eqref{ee}. It follows from
\eqref{cwp}--\eqref{csf} that
\begin{align*}
&a(p-p^{\rm PML},\boldsymbol{u}-\boldsymbol{u}^{\rm PML}; q,\boldsymbol{v})\\
=&a(p,\boldsymbol{u};q, \boldsymbol{v})-a(p^{\rm PML},\boldsymbol{u}^{\rm
PML};q,\boldsymbol{v})\\
=&\langle f, q\rangle_{\partial B}-\langle f^{\rm PML},
q\rangle_{\partial B}+a^{\rm PML}(p^{\rm PML},\boldsymbol{u}^{\rm PML};
q,\boldsymbol{v})-a(p^{\rm PML},\boldsymbol{u}^{\rm PML};q, \boldsymbol{v})\\
=&\langle (\mathscr{T}^{\rm PML}-\mathscr{T})p^{\rm inc}, q\rangle_{\partial B}
-\langle (\mathscr{T}^{\rm PML}-\mathscr{T})p^{\rm PML}, q\rangle_{\partial B}\\
=&\langle (\mathscr{T}-\mathscr{T}^{\rm PML})(p^{\rm PML}-p^{\rm inc}),
q\rangle_{\partial B},
\end{align*}
which completes the proof upon using Lemma \ref{boe} and the trace theorem.
\end{proof}

\section{Finite element approximation}

In this section we introduce the finite element approximations of the PML
problem \eqref{twp}. 

\subsection{Error representation formula}

Let $\mathcal{M}_h$ be a regular tetrahedral partition of the domain
$D=\Omega\cup\Gamma_s\cup\Omega_s=\{\boldsymbol x\in\mathbb R^3:
|x_j|<L_j+d_j, 1\leq j\leq 3\}$ such that $\mathcal{M}_h|_\Omega$ and  
$\mathcal{M}_h|_{\Omega_s}$ are also regular tetrahedral partitions of
$\Omega$ and $\Omega_s$, respectively. Let $V_h\subset
H^1(\Omega)$ and $\boldsymbol{U}_h\subset H^1(\Omega_s)^3$ be the conforming
linear finite element space over $\Omega$ and $\Omega_s$, respectively, and
\[
V_{\Gamma,h}=\{p_h\in V_h: p_h=0\;\text{on}~\Gamma\}.
\]
The finite element approximation to the PML problem \eqref{twp} reads as
follows: Find $(p_h, \boldsymbol{u}_h)\in V_h\times \boldsymbol{U}_h$ 
such that $p_h=I_h p^{\rm inc}$ on $\Gamma$ and
\begin{equation}\label{fem-twp}
b(p_h,\boldsymbol u_h; q_h, \boldsymbol{v}_h)=-\int_{\Omega}g\bar q_h{\rm
d}\boldsymbol{x},\quad\forall\, (q_h, \boldsymbol{v}_h) \in V_{\Gamma,h} \times
\boldsymbol{U}_h.
\end{equation}

For any $\varphi\in H^1(\Omega_a)$, let $\tilde\varphi$ be its extension in
$\Omega_{\rm PML}$ such that
\begin{align}
&\nabla\cdot(\bar A\nabla\tilde\varphi)+ \kappa^2\bar{b}\tilde\varphi=0
\quad\text{in} ~\Omega_{\rm PML},\label{5.1a}\\
&\tilde\varphi=\varphi \quad\text{on} ~\partial B, \quad\varphi=0 \quad\text{on}
~\Gamma.\label{5.1b}
\end{align}
Introduce the sesquilinear form $c:H^1(\Omega_{\rm PML})\times H^1(\Omega_{\rm
PML})\to \mathbb{C}$ as follows:
\[
c(\varphi, \psi)=\int_{\Omega_{\rm PML}}\left(\bar A\nabla
\varphi\cdot\nabla\bar\psi-\kappa^2\bar{b} \varphi\bar\psi\right){\rm
d}\boldsymbol x.
\]
The weak formulation for \eqref{5.1a}--\eqref{5.1b} is: Given $\varphi\in
H^{1/2}(\partial B)$, find $\tilde\varphi\in H^1(\Omega_{\rm PML})$
such that $\tilde\varphi=0$ on $\Gamma$, $\tilde\varphi=\varphi$ on $\partial B$, and
\begin{equation}\label{wf-c}
c(\tilde\varphi, \psi)=0,\quad\forall\,\psi\in H^1_0(\Omega_{\rm PML}).
\end{equation}
In this paper we will not elaborate on the well-posedness of \eqref{wf-c} and
simply make the following assumption: There exists a unique solution to
the boundary value problem \eqref{wf-c} in the PML layer.

In order to obtain a constant independent of PML parameter $\sigma$ in the
inf-sup condition, we define
\begin{equation*}
|||\varphi|||_{\Omega_{\rm PML}}=\left(\int_{\Omega_{\rm PML}}
\sum_{j=1}^3\frac{1}{1+\sigma_j}\left|\partial_{x_j}
\varphi\right|^2+(1+\sigma_1\sigma_2\sigma_3)\kappa^2|\varphi|^2\right)^{1/2}.
\end{equation*}
By using the general theory in  \cite[Chap. 5]{ba-73}, we know that there exists
a constant $\hat C > 0$ such that
\begin{equation}\label{ifsp-c}
\sup_{0\ne\psi\in H^1_0(\Omega_{\rm PML})}\frac{|c(\varphi,
\psi)|}{|||\psi|||_{\Omega_{\rm PML}}}\ge \hat C |||\varphi|||_{ \Omega_{\rm
PML}},\quad\forall\,\varphi\in H^1(\Omega_{\rm PML}).
\end{equation}
The constant $\hat C$ depends on the domain $\Omega_{\rm PML}$ and the wave
number $\kappa$.

\begin{lemm}[Estimates for the extension] 
For any $\varphi\in H^1(\Omega_a)$, which is extended to be a function
$\tilde\varphi\in H^1(\Omega)$ according to \eqref{5.1a}--\eqref{5.1b}. Then
there exists a constant $C>0$ independent of $\kappa$ and $\sigma$ such that
\begin{align}
\|\nabla\tilde\varphi\|_{L^2(\Omega_{\rm PML})}&\le C\hat
C^{-1}\alpha_0(1+\kappa L)\|\varphi\|_{H^{1/2}(\partial B)}, \label{est-ext1}\\
\|A\nabla\bar{\tilde\varphi}\cdot\boldsymbol{n}_3\|_{H^{-1/2}(\Gamma)}&\le C\hat
C^{-1}\alpha_0^3 (1+\kappa L)^2\|\varphi\|_{H^{1/2}(\partial
B)},\label{est-ext2}
\end{align}
where $\boldsymbol n_3$ is the unit outward normal vector on $\Gamma$. 
\end{lemm}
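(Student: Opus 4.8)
The plan is to solve the inhomogeneous Dirichlet problem \eqref{5.1a}--\eqref{5.1b} by subtracting a boundary lifting, to control the remainder through the inf-sup condition \eqref{ifsp-c}, and only at the very end to convert the resulting bound in the weighted norm $|||\cdot|||_{\Omega_{\rm PML}}$ into the physical norms appearing in \eqref{est-ext1} and \eqref{est-ext2}. First I would choose a lifting $\xi\in H^1(\Omega_{\rm PML})$ with $\xi=\varphi$ on $\partial B$ and $\xi=0$ on $\Gamma$, so that $\tilde\varphi-\xi\in H^1_0(\Omega_{\rm PML})$. Since $\tilde\varphi$ solves \eqref{wf-c}, we have $c(\tilde\varphi,\psi)=0$ for every $\psi\in H^1_0(\Omega_{\rm PML})$, hence $c(\tilde\varphi-\xi,\psi)=-c(\xi,\psi)$, and applying \eqref{ifsp-c} to the trial function $\tilde\varphi-\xi$ gives
\[
\hat C\,|||\tilde\varphi-\xi|||_{\Omega_{\rm PML}}
\le\sup_{0\ne\psi\in H^1_0(\Omega_{\rm PML})}
\frac{|c(\tilde\varphi-\xi,\psi)|}{|||\psi|||_{\Omega_{\rm PML}}}
=\sup_{0\ne\psi\in H^1_0(\Omega_{\rm PML})}
\frac{|c(\xi,\psi)|}{|||\psi|||_{\Omega_{\rm PML}}}.
\]
By the triangle inequality, $|||\tilde\varphi|||_{\Omega_{\rm PML}}$ is then controlled by $|||\xi|||_{\Omega_{\rm PML}}$ together with the continuity constant of $c(\xi,\cdot)$ in the weighted norm, the latter involving only bounded powers of $\alpha_0$ because on the support of $\xi$ the matrix $A$ and the coefficient $b$ are controlled by $\alpha_0$.

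The crux, and the main obstacle, is to estimate the lifting $\xi$ in the weighted norm by $C(1+\kappa L)\|\varphi\|_{H^{1/2}(\partial B)}$ with constants independent of $\sigma$. The essential observation is that the product $\sigma_1\sigma_2\sigma_3$ vanishes identically outside the eight corner subregions of $\Omega_{\rm PML}$: on any face or edge at least one coordinate remains below $L_j$, forcing one factor to vanish, so the zeroth-order weight $(1+\sigma_1\sigma_2\sigma_3)$ equals $1$ there. I would accordingly take $\xi$ supported in a fixed neighborhood of $\partial B$ in which only one $\sigma_j$ is active at a time, obtained from a standard trace lifting of $\varphi$ that decays to zero toward $\Gamma$; the down-weighting $\tfrac{1}{1+\sigma_j}$ of the transversal derivative absorbs the rapid variation of the lifting, while the tangential derivatives and the $L^2$-term carry weight $1$. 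This reduces $|||\xi|||_{\Omega_{\rm PML}}$ to a frequency-weighted $H^1$ norm of the lifting, bounded by $C(1+\kappa L)\|\varphi\|_{H^{1/2}(\partial B)}$. Finally, the passage to the physical norm uses
\[
\|\nabla\tilde\varphi\|_{L^2(\Omega_{\rm PML})}^2
=\sum_{j=1}^3\int_{\Omega_{\rm PML}}(1+\sigma_j)\,
\frac{1}{1+\sigma_j}\,|\partial_{x_j}\tilde\varphi|^2\,{\rm d}\boldsymbol x
\le\Bigl(1+\max_{1\le j\le3}\|\sigma_j\|_{L^\infty}\Bigr)\,
|||\tilde\varphi|||_{\Omega_{\rm PML}}^2 ,
\]
together with $1+\sigma_j\le\sqrt2\,\alpha_0$, which introduces the power of $\alpha_0$ in \eqref{est-ext1}.

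For \eqref{est-ext2} I would argue by duality against the norm of $H^{-1/2}(\Gamma)$. Given $\zeta\in H^{1/2}(\Gamma)$, lift it to $Z\in H^1(\Omega_{\rm PML})$ with $Z=0$ on $\partial B$, $Z=\zeta$ on $\Gamma$, and $\|\nabla Z\|_{L^2(\Omega_{\rm PML})}+\kappa\|Z\|_{L^2(\Omega_{\rm PML})}\le C(1+\kappa L)\|\zeta\|_{H^{1/2}(\Gamma)}$ via a standard, $\sigma$-independent Sobolev lifting. Green's formula combined with the conjugate of the extension equation \eqref{5.1a}, namely $\nabla\cdot(A\nabla\bar{\tilde\varphi})=-\kappa^2 b\,\bar{\tilde\varphi}$, converts the co-normal trace into a volume pairing in which the boundary contribution on $\partial B$ drops out because $Z$ vanishes there,
\[
\langle A\nabla\bar{\tilde\varphi}\cdot\boldsymbol n_3,\zeta\rangle_{\Gamma}
=\int_{\Omega_{\rm PML}}
\bigl(A\nabla\bar{\tilde\varphi}\cdot\nabla Z
-\kappa^2 b\,\bar{\tilde\varphi}\,Z\bigr)\,{\rm d}\boldsymbol x .
\]
I would bound the gradient term by $\|A\|_{L^\infty}\,\|\nabla\tilde\varphi\|_{L^2(\Omega_{\rm PML})}\|\nabla Z\|_{L^2(\Omega_{\rm PML})}$ with $\|A\|_{L^\infty}\le\alpha_0^2$ and insert \eqref{est-ext1}, and bound the zeroth-order term by $\kappa^2\|b\|_{L^\infty}\|\tilde\varphi\|_{L^2(\Omega_{\rm PML})}\|Z\|_{L^2(\Omega_{\rm PML})}$ with $\|b\|_{L^\infty}\le\alpha_0^3$ and $\kappa\|\tilde\varphi\|_{L^2(\Omega_{\rm PML})}\le|||\tilde\varphi|||_{\Omega_{\rm PML}}$. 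Collecting the factors, both contributions are of order $\alpha_0^3(1+\kappa L)^2\|\varphi\|_{H^{1/2}(\partial B)}\|\zeta\|_{H^{1/2}(\Gamma)}$, and taking the supremum over $\zeta$ yields \eqref{est-ext2}.

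The genuinely delicate point throughout is the lifting estimate in the weighted norm: one must produce boundary liftings whose $|||\cdot|||_{\Omega_{\rm PML}}$ is controlled with $\sigma$-independent constants, and the only mechanism that makes this possible is the vanishing of $\sigma_1\sigma_2\sigma_3$ away from the corners, which keeps the zeroth-order weight equal to $1$ on the support of the lifting. The remaining work is the careful tracking of the powers of $\alpha_0$ coming from the anisotropic coefficients $A$ and $b$, from the continuity constant of $c$, and from the weighted-to-physical norm conversion; since $\alpha_0\ge1$, any surplus powers are dominated by and absorbed into the exponents stated in \eqref{est-ext1} and \eqref{est-ext2}.
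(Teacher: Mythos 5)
Your overall strategy coincides with the paper's: subtract a lifting $\xi$ of the Dirichlet data, use the orthogonality $c(\tilde\varphi,\psi)=0$ from \eqref{wf-c} together with the inf-sup condition \eqref{ifsp-c} to control $|||\tilde\varphi-\xi|||_{\Omega_{\rm PML}}$ by the continuity of $c(\xi,\cdot)$, pass to $|||\tilde\varphi|||_{\Omega_{\rm PML}}$ by the triangle inequality and the trace theorem, convert the weighted norm to the physical norm for \eqref{est-ext1}, and prove \eqref{est-ext2} by exactly the paper's duality argument (Green's identity in $\Omega_{\rm PML}$ against a test function vanishing on $\partial B$, using \eqref{5.1a} to trade the volume term, then Cauchy--Schwarz and the trace inequality). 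In that respect the proposal is sound and matches the paper step for step.

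The one place where you deviate is the step you yourself call the crux, and it is both unnecessary and, as stated, would not survive scrutiny. You try to build a lifting supported in a thin collar of $\partial B$ on which ``only one $\sigma_j$ is active at a time'' so that $|||\xi|||_{\Omega_{\rm PML}}\le C(1+\kappa L)\|\varphi\|_{H^{1/2}(\partial B)}$ with no $\alpha_0$ factor. But any lifting of $\varphi\in H^{1/2}(\partial B)$ must cover a neighborhood of the corners of $\partial B$, and every such neighborhood meets the corner subregions of $\Omega_{\rm PML}$ where all three $\sigma_j$ are positive; shrinking the collar to thickness $\delta$ so as to keep $\sigma_1\sigma_2\sigma_3=O(1)$ there forces $\delta\sim d\,\sigma_0^{-1/m}$, and since $1/(1+\sigma_j)\approx 1$ near $\partial B$ (the $\sigma_j$ vanish on $\partial B$), the transversal-derivative part of $|||\xi|||_{\Omega_{\rm PML}}$ then scales like $\delta^{-1/2}$, reintroducing the $\sigma$-dependence you were trying to remove. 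The paper avoids this entirely: it takes a generic lifting $\zeta$ and simply pays the global bound $(1+\sigma_1\sigma_2\sigma_3)^{1/2}\le C\alpha_0^{3/2}$ in the zeroth-order weight, obtaining $|||\zeta|||_{\Omega_{\rm PML}}\le C\alpha_0^{3/2}(1+\kappa L)\|\zeta\|_{H^1(\Omega_{\rm PML})}$ — which is all one needs, since the lemma's right-hand sides carry explicit powers of $\alpha_0$ anyway. If you replace your collar construction by this crude bound, the rest of your argument goes through unchanged. Finally, your closing remark that ``any surplus powers are dominated by and absorbed into the exponents stated'' has the inequality backwards: since $\alpha_0\ge 1$, a higher power of $\alpha_0$ dominates a lower one, so surpluses cannot be absorbed — deficits can. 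To land on the stated exponents you must actually track the powers coming from $A$, $b$, and the weighted-to-physical conversion (e.g.\ your duality estimate with $\|b\|_{L^\infty}\le\alpha_0^3$ paired with $\kappa\|\tilde\varphi\|_{L^2(\Omega_{\rm PML})}\le|||\tilde\varphi|||_{\Omega_{\rm PML}}$ produces, as written, a power exceeding $\alpha_0^3$), although it should be said that the paper's own bookkeeping of these powers is itself not airtight.
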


\begin{proof}
For any $\zeta\in H^1(\Omega_{\rm PML})$ such that $\zeta=\varphi$ on $\partial
B$ and $\zeta=0$ on $\Gamma$. By the inf-sup condition in \eqref{ifsp-c} and
using \eqref{wf-c}, we know that
\begin{equation*}
\hat C |||\tilde\varphi-\zeta|||_{\Omega_{\rm PML}}\le \sup_{0\ne\psi\in
H^1_0(\Omega_{\rm PML})}\frac{|c(\tilde\varphi-\zeta,
\psi)|}{|||\psi|||_{\Omega_{\rm PML}}} =\sup_{0\ne\psi\in H^1_0(\Omega_{\rm
PML})}\frac{|c(\zeta, \psi)|}{|||\psi|||_{\Omega_{\rm PML}}}.
\end{equation*}
By Cauchy--Schwarz inequality
\begin{equation*}
|c(\zeta, \psi)|\le C\alpha_0^{3/2} (1+\kappa L)\|\zeta\|_{H^1(\Omega_{\rm
PML})}|||\psi|||_{\Omega_{\rm PML}}.
\end{equation*}
Noting 
\[
|||\zeta|||_{\Omega_{\rm PML}}\le C\alpha_0^{3/2}
(1+\kappa L)\|\zeta\|_{H^1(\Omega_{\rm PML})},
\]
using the triangle inequality and the trace inequality, we conclude that
\begin{equation}\label{ext1}
|||\tilde\varphi|||_{\Omega_{\rm PML}}\le C\hat
C^{-1}\alpha_0^{3/2}(1+\kappa L)\|\varphi\|_{H^1(\partial B)},
\end{equation}
which shows the first estimate in the theorem by using the definition of
$|||\cdot|||_{\Omega_{\rm PML}}$.

Next, for any $\psi\in H^1(\Omega_{\rm PML})$ such that $\psi=0$ on $\partial
B$, using \eqref{5.1a} and the integration by parts, we obtain 
\begin{align*}
\int_{\Gamma}(A\nabla\bar{\tilde\varphi}\cdot\boldsymbol n_3)\bar\psi{\rm d}s
&=\int_{\partial\Omega_{\rm PML}}(A\nabla\bar{\tilde\varphi}
\cdot\boldsymbol n_3)\bar\psi{\rm d}s\\
&=\int_{\Omega_{\rm PML}}\left(A\nabla\bar{\tilde\varphi}
\cdot\nabla\bar\psi+\nabla\cdot( A\nabla\bar{\tilde\varphi})\bar\psi\right){\rm
d}\boldsymbol x
=\int_{\Omega_{\rm PML}}\left(A\nabla\bar{\tilde\varphi}\cdot\nabla\bar\psi-
\kappa ^2 b\bar{\tilde\varphi}\bar\psi\right){\rm d}\boldsymbol x.
\end{align*}
It follows from the Cauchy--Schwarz inequality and \eqref{ext1} that 
\begin{align*}
\left|\int_{\Gamma}(A\nabla\bar{\tilde\varphi}
\cdot\boldsymbol n_3)\bar\psi{\rm d}s\right|
&\le C\alpha_0^{3/2}(1+\kappa L)|||\tilde\varphi|||_{\Omega_{\rm
PML}}\|\psi\|_{H^1(\Omega_{\rm PML})}\\
&\le C\hat C^{-1}\alpha_0^{3}(1+\kappa L)^2\|\varphi\|_{H^1(\partial
B)}\|\psi\|_{H^1(\Omega_{\rm PML})},
\end{align*}
which completes the proof after using the trace inequality.
\end{proof}

\begin{lemm}[Error representation formula]
For any $\varphi\in H^1(\Omega_a)$, which is extended to be a function
$\tilde\varphi\in H^1(\Omega)$
according to \eqref{5.1a}--\eqref{5.1b}, and $\varphi_h\in V_{\Gamma,h}$, we have
\begin{align}\label{err-rep}
a(p-p_h,
\boldsymbol{u}-\boldsymbol{u}_h;\varphi,\boldsymbol{v})=&\int_{\Omega}g(\bar{
\tilde\varphi}_h-\bar{\tilde\varphi}){\rm d}\boldsymbol x
-b(p_h,\boldsymbol{u}_h;\tilde\varphi-\tilde\varphi_h,\boldsymbol{v}-\boldsymbol{v}_h) \notag\\
&-\int_{\partial B}(\mathscr{T}-\mathscr{T}^{\rm PML})(p_h-p^{\rm
inc})\bar\varphi{\rm d}s
-\int_{\Gamma}(A\nabla\bar{\tilde{\varphi}}\cdot\boldsymbol n_3)(p^{\rm inc}-I_h
p^{\rm inc}){\rm d}s.
\end{align}
\end{lemm}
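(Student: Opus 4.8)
The plan is to start from the left-hand side and peel off the data term using that $(p,\boldsymbol u)$ solves the exact-TBC variational problem \eqref{atbc}. Taking the admissible test pair $(\varphi,\boldsymbol v)\in\boldsymbol X$ there gives $a(p,\boldsymbol u;\varphi,\boldsymbol v)=\langle f,\varphi\rangle_{\partial B}$, so that
\[
a(p-p_h,\boldsymbol u-\boldsymbol u_h;\varphi,\boldsymbol v)=\langle f,\varphi\rangle_{\partial B}-a(p_h,\boldsymbol u_h;\varphi,\boldsymbol v).
\]
Everything then reduces to rewriting $a(p_h,\boldsymbol u_h;\varphi,\boldsymbol v)$ so that the volume residual of the discrete PML problem, the Galerkin orthogonality, and the two boundary-operator errors appear explicitly. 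Throughout, $\tilde\varphi_h:=\varphi_h\in V_{\Gamma,h}$ and $\boldsymbol v_h\in\boldsymbol U_h$ are the arbitrary discrete functions whose freedom will be used.

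First I would transfer the acoustic part of $a(p_h,\boldsymbol u_h;\varphi,\boldsymbol v)$ onto the full PML domain $\Omega$ using the extension $\tilde\varphi$. Because $A=I$, $b=1$ and $\tilde\varphi=\varphi$ throughout $\Omega_a$, the volume and interface integrals of $a$ and of $b$ coincide on $\Omega_a\cup\Omega_s\cup\Gamma_s$; their difference is the PML-layer integral $\int_{\Omega_{\rm PML}}(A\nabla p_h\cdot\nabla\bar{\tilde\varphi}-\kappa^2 b\,p_h\bar{\tilde\varphi})$ together with the exact DtN term $-\int_{\partial B}(\mathscr{T}p_h)\bar\varphi$. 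Integrating the layer integral by parts and using that $\bar{\tilde\varphi}$ solves $\nabla\cdot(A\nabla\bar{\tilde\varphi})+\kappa^2 b\,\bar{\tilde\varphi}=0$ (the conjugate of \eqref{5.1a}) collapses the volume contribution to a boundary integral over $\partial\Omega_{\rm PML}=\partial B\cup\Gamma$. On $\partial B$ the outward normal of $\Omega_{\rm PML}$ is $-\boldsymbol n_2$ and $A=I$, and by the definition of $\mathscr{T}^{\rm PML}$ together with the reciprocity identity $\int_{\partial B}(\mathscr{T}^{\rm PML}w)\zeta=\int_{\partial B}w(\mathscr{T}^{\rm PML}\zeta)$ (Green's second identity for the symmetric operator $\nabla\cdot(A\nabla\cdot)+\kappa^2 b$), the trace $\partial_{\boldsymbol n_2}\bar{\tilde\varphi}$ is exactly $\mathscr{T}^{\rm PML}\bar\varphi$. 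This produces
\[
a(p_h,\boldsymbol u_h;\varphi,\boldsymbol v)=b(p_h,\boldsymbol u_h;\tilde\varphi,\boldsymbol v)-\int_{\partial B}(\mathscr{T}-\mathscr{T}^{\rm PML})p_h\,\bar\varphi\,{\rm d}s-\int_{\Gamma}p_h\,(A\nabla\bar{\tilde\varphi}\cdot\boldsymbol n_3)\,{\rm d}s,
\]
and on $\Gamma$ one replaces $p_h$ by $I_h p^{\rm inc}$ using the discrete boundary condition in \eqref{fem-twp}.

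Next I would bring in the discretization. For $(\tilde\varphi_h,\boldsymbol v_h)\in V_{\Gamma,h}\times\boldsymbol U_h$, the Galerkin equation \eqref{fem-twp} gives $b(p_h,\boldsymbol u_h;\tilde\varphi_h,\boldsymbol v_h)=-\int_\Omega g\bar{\tilde\varphi}_h$, so by conjugate-linearity in the test slot
\[
b(p_h,\boldsymbol u_h;\tilde\varphi,\boldsymbol v)=b(p_h,\boldsymbol u_h;\tilde\varphi-\tilde\varphi_h,\boldsymbol v-\boldsymbol v_h)-\int_\Omega g\,\bar{\tilde\varphi}_h\,{\rm d}\boldsymbol x.
\]
It then remains to express $\langle f,\varphi\rangle_{\partial B}$ through $g$ and the two DtN operators. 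Writing $g=\mathscr{L}p^{\rm inc}$ on $\Omega_{\rm PML}$ and $g=0$ on $\Omega_a$, and integrating $\int_\Omega g\,\bar{\tilde\varphi}$ by parts as above — now moving the derivatives onto $\bar{\tilde\varphi}$ and using $f=\partial_{\boldsymbol n_2}p^{\rm inc}-\mathscr{T}p^{\rm inc}$ together with $\partial_{\boldsymbol n_2}\bar{\tilde\varphi}=\mathscr{T}^{\rm PML}\bar\varphi$ and reciprocity — yields the incident-field analogue
\[
\langle f,\varphi\rangle_{\partial B}+\int_\Omega g\,\bar{\tilde\varphi}\,{\rm d}\boldsymbol x=-\int_{\partial B}(\mathscr{T}-\mathscr{T}^{\rm PML})p^{\rm inc}\,\bar\varphi\,{\rm d}s-\int_{\Gamma}p^{\rm inc}(A\nabla\bar{\tilde\varphi}\cdot\boldsymbol n_3)\,{\rm d}s,
\]
which is nothing but the relation $f-f^{\rm PML}=(\mathscr{T}^{\rm PML}-\mathscr{T})p^{\rm inc}$ recast through the extension. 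Substituting the three displayed relations into $a(p-p_h,\boldsymbol u-\boldsymbol u_h;\varphi,\boldsymbol v)=\langle f,\varphi\rangle_{\partial B}-a(p_h,\boldsymbol u_h;\varphi,\boldsymbol v)$, the $\int_\Omega g\,\bar{\tilde\varphi}_h$ contributions assemble into $\int_\Omega g(\bar{\tilde\varphi}_h-\bar{\tilde\varphi})$, the $\partial B$ contributions collect into the difference $(\mathscr{T}-\mathscr{T}^{\rm PML})(p_h-p^{\rm inc})$, and the $\Gamma$ contributions collect into $(p^{\rm inc}-I_h p^{\rm inc})$, giving \eqref{err-rep}.

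The delicate part is the sign and normal bookkeeping in the two integrations by parts: one must keep track that the outward normal of $\Omega_{\rm PML}$ on $\partial B$ is $-\boldsymbol n_2$, that $A$ and $b$ reduce to the identity on $\partial B$ and $\Omega_a$, and that it is the conjugated extension $\bar{\tilde\varphi}$ — not $\tilde\varphi$ — whose $\boldsymbol n_2$-derivative realizes $\mathscr{T}^{\rm PML}$. The reciprocity of $\mathscr{T}^{\rm PML}$ is what lets each boundary pairing be read with the operator on either factor, and it is precisely the consistent use of this symmetry together with $f-f^{\rm PML}=(\mathscr{T}^{\rm PML}-\mathscr{T})p^{\rm inc}$ that makes the $p^{\rm inc}$ terms arising from $g$ and from $f$ merge with the $p_h$ terms into the single operator-difference acting on $p_h-p^{\rm inc}$. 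I would cross-check the overall sign of this boundary term against the pure-PML computation in the proof of the convergence theorem above, where the same difference $(\mathscr{T}-\mathscr{T}^{\rm PML})(p^{\rm PML}-p^{\rm inc})$ appears with a definite sign; reconciling that sign with the one in \eqref{err-rep} is the step most prone to error and the one I would verify most carefully.
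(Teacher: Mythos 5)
Your proof is correct, and it takes a genuinely different route from the paper's. The paper inserts the continuous truncated-PML solution $\hat p$ as an intermediate quantity: it first derives the identity \eqref{4.7}, $a(p-\hat p,\boldsymbol{u}-\hat{\boldsymbol{u}};\varphi,\boldsymbol{v})=\int_{\partial B}(\mathscr{T}-\mathscr{T}^{\rm PML})(\hat p-p^{\rm inc})\bar\varphi\,{\rm d}s$, from the two variational problems \eqref{atbc} and \eqref{cwp}, then converts $a(\hat p-p_h,\hat{\boldsymbol{u}}-\boldsymbol{u}_h;\varphi,\boldsymbol{v})$ into $b(\hat p-p_h,\hat{\boldsymbol{u}}-\boldsymbol{u}_h;\tilde\varphi,\boldsymbol{v})$ plus DtN and PML-layer corrections in \eqref{4.8}--\eqref{4.9}, and finally invokes \eqref{twp}, \eqref{fem-twp}. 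You bypass $\hat p$ entirely: you transfer $a(p_h,\boldsymbol{u}_h;\varphi,\boldsymbol{v})$ to $b(p_h,\boldsymbol{u}_h;\tilde\varphi,\boldsymbol{v})$ directly, and you recover the incident-field contribution by integrating $\int_\Omega g\bar{\tilde\varphi}\,{\rm d}\boldsymbol{x}$ by parts, which amounts to re-deriving $f-f^{\rm PML}=(\mathscr{T}^{\rm PML}-\mathscr{T})p^{\rm inc}$ through the extension. The same three ingredients appear in both arguments --- the adjoint extension problem \eqref{5.1a}--\eqref{5.1b}, the reciprocity of $\mathscr{T}^{\rm PML}$, and the Galerkin relation \eqref{fem-twp} --- and your sign and normal bookkeeping (outward normal $-\boldsymbol{n}_2$ on $\partial B$, $A=I$, $b=1$ there, $\partial_{\boldsymbol n_2}\bar{\tilde\varphi}=\mathscr{T}^{\rm PML}\bar\varphi$) checks out. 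Your version has the advantage that it never uses the existence of $\hat p$, which in the paper rests on the equivalence lemma and the smallness condition $\gamma_2<\gamma_0$ of the convergence theorem in Section 3, so your identity holds unconditionally; the paper's route buys a transparent reuse of the PML convergence identity \eqref{4.7} and keeps the finite element and PML errors split along the decomposition $p-p_h=(p-\hat p)+(\hat p-p_h)$.

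Concerning the sign you flagged as the delicate point: carried out to the end, your assembly produces $+\int_{\partial B}(\mathscr{T}-\mathscr{T}^{\rm PML})(p_h-p^{\rm inc})\bar\varphi\,{\rm d}s$, whereas \eqref{err-rep} displays this term with a minus sign. This is not a defect of your approach: recombining the paper's own intermediate identities \eqref{4.7}--\eqref{4.9} also yields the plus sign, since $(\hat p-p^{\rm inc})-(\hat p-p_h)=p_h-p^{\rm inc}$, so the minus in the final statement is a sign slip in the paper's last display. It is immaterial downstream, because the term enters the proof of Theorem \ref{thmp} only through the absolute value $|I_3|$. Your $\Gamma$ term, after substituting the discrete boundary condition $p_h=I_hp^{\rm inc}$ on $\Gamma$, agrees with \eqref{err-rep} exactly.
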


\begin{proof}
First by \eqref{asf}, \eqref{atbc}, \eqref{cwp}, and \eqref{csf}, we have
\begin{align}\label{4.7}
a(p-\hat p, \boldsymbol{u}-\hat{\boldsymbol{u}};\varphi,\boldsymbol{v})&=
\int_{\partial B} f\bar\varphi{\rm d}s-\int_{\partial B} f^{\rm
PML}\bar\varphi{\rm d}s +a^{\rm PML}(\hat p,
\hat{\boldsymbol{u}};\varphi,\boldsymbol{v})-a(\hat p,
\hat{\boldsymbol{u}};\varphi,\boldsymbol{v})\notag\\
&=\int_{\partial B}(\mathscr{T}-\mathscr{T}^{\rm PML})(\hat p-p^{\rm
inc})\bar\varphi {\rm d}s.
\end{align}
Using \eqref{4.7} yields 
\begin{align}\label{4.8}
a(p-p_h, \boldsymbol{u}-\boldsymbol{u}_h;\varphi,\boldsymbol{v})=& a(p-\hat p,
\boldsymbol{u}-\hat{\boldsymbol{u}};\varphi,\boldsymbol{v}) + a(\hat p-p_h,
\hat{\boldsymbol{u}}-\boldsymbol{u}_h;\varphi,\boldsymbol{v})\notag\\
=&\int_{\partial B}(\mathscr{T}-\mathscr{T}^{\rm PML})(\hat p-p^{\rm
inc})\bar\varphi{\rm d}s + b(\hat p-p_h,
\hat{\boldsymbol{u}}-\boldsymbol{u}_h;\tilde\varphi,\boldsymbol{v}) \notag\\
&-\int_{\partial B}\mathscr{T}(\hat p-p_h)\bar\varphi{\rm d}s-\int_{\Omega_{\rm
PML}}(A\nabla (\hat p-p_h)\cdot\nabla\bar{\tilde{\varphi}}-\kappa^2 b(\hat
p-p_h)\bar{\tilde{\varphi}}){\rm d}\boldsymbol x.
\end{align}
Recalling that $\boldsymbol n_2$ is the unit outer normal to $\partial B$ which
points outside $B$ and $\boldsymbol{n}_3$ is the unit outer normal
vector on $\Gamma$ directed outside $\Omega_{\rm PML}$,
we deduce that
\begin{align}\label{4.9}
\int_{\Omega_{\rm PML}}(A\nabla(\hat
p-p_h)\cdot\nabla\bar{\tilde{\varphi}}&-\kappa^2 b(\hat
p-p_h)\bar{\tilde{\varphi}}) {\rm
d}\boldsymbol x=\int_{\Gamma}(A\nabla\bar{\tilde{\varphi}}
\cdot\boldsymbol{n}_3)(\hat p-p_h){\rm d}s-\int_{\partial
B}\partial_{\boldsymbol n_2}\bar{\tilde\varphi}(\hat
p-p_h){\rm d}s \notag\\
&=\int_{\Gamma}(A\nabla\bar{\tilde{\varphi}}\cdot\boldsymbol{n}_3)(\hat
p-p_h){\rm d}s-\int_{\partial B}(\mathscr{T}^{\rm PML} (\hat
p-p_h))\bar\varphi{\rm d}s,
\end{align}
where we have used \eqref{5.1a}--\eqref{5.1b}, the definition of
$\mathscr{T}^{\rm PML}$, and the identity (c.f., \cite[Lemma 5.1]{cw-nm08})
\[
\int_{\partial B} (\mathscr{T}^{\rm PML}\varphi) \bar\psi{\rm d}s=\int_{\partial
B} (\mathscr{T}^{\rm PML}\bar\psi)\varphi {\rm d}s, \quad\forall\varphi,\psi\in
H^1(\Omega_{\rm PML}).
\]
By \eqref{twp}, \eqref{fem-twp}, and \eqref{4.8}--\eqref{4.9},
\begin{align*}
&a(p-p_h, \boldsymbol{u}-\boldsymbol{u}_h;\varphi,\boldsymbol{v})  \\
=& b(\hat p-p_h,
\hat{\boldsymbol{u}}-\boldsymbol{u}_h;\tilde\varphi,\boldsymbol{v})-\int_{
\partial B}(\mathscr{T}-\mathscr{T}^{\rm PML})(p_h-p^{\rm inc})\bar\varphi{\rm
d}s-\int_{\Gamma}(A\nabla\bar{\tilde{\varphi}}\cdot\boldsymbol{n}_3)(\hat
p-p_h){\rm d}s  \\
=&\int_{\Omega}g(\bar{\tilde\varphi}_h-\bar{\tilde\varphi}){\rm d}\boldsymbol x
-b(p_h,\boldsymbol{u}_h;\tilde\varphi-\tilde\varphi_h,\boldsymbol{v}-\boldsymbol{v}_h)  \\
&-\int_{\partial B}(\mathscr{T}-\mathscr{T}^{\rm PML})(p_h-p^{\rm
inc})\bar\varphi{\rm d}s
-\int_{\Gamma}(A\nabla\bar{\tilde{\varphi}}\cdot\boldsymbol{n}_3)(p^{\rm
inc}-I_h p^{\rm inc}){\rm d}\boldsymbol x,
\end{align*}
which completes the proof.
\end{proof}

\subsection{A posteriori error analysis}

For any $K\in\mathcal{M}_h$, we denote by $h_K$ its diameter. Let
$\mathcal{B}_h$ denote the set of all sides that do not lie on $\Gamma$. For any
$e\in\mathcal{B}_h$, $h_e$ stands for its length. For any $K\in\mathcal{M}_h$,
we introduce the residual:
\begin{equation}\label{res}
R_K:=\begin{cases}
 \nabla\cdot(A\nabla p_h)+\kappa^2 b p_h -g &\quad\text{for} ~
K\in\mathcal{M}_h|_{\Omega}\\
\nabla\cdot \boldsymbol\sigma(\boldsymbol u_h) +\omega^2\boldsymbol{u}_h
&\quad\text{for} ~ K\in\mathcal{M}_h|_{\Omega_s}
 \end{cases}.
\end{equation}
For any interior side $e\in\mathcal{B}_h$ not lying on the interface $\Gamma_s$
which is the common side of $K_1, K_2\in\mathcal{M}_h$, we define the jump
residual across $e$:
\begin{equation}\label{jmp}
J_e:=\begin{cases}
 (A\nabla p_h)|_{K_1}\cdot\boldsymbol{\nu}-(A\nabla
p_h)|_{K_2}\cdot\boldsymbol{\nu} &\quad\text{for} ~ e\in
\mathcal{B}_h|_{\Omega}\\
\boldsymbol\sigma(\boldsymbol u_h)\cdot\boldsymbol\nu|_{K_1} -
\boldsymbol\sigma(\boldsymbol u_h)\cdot\boldsymbol\nu|_{K_2}
 &\quad\text{for} ~ e\in \mathcal{B}_h|_{\Omega_s}
 \end{cases},
\end{equation}
where we have used the notation that the unit normal vector $\boldsymbol{\nu}$
on $e$ points from $K_2$ to $K_1$. If $e$ lies on the interface $\Gamma_s$, then
we define the jump residual as
\begin{equation}\label{jmp-s}
J_e:=\begin{cases}
 \partial_{\boldsymbol\nu} p_h|_{K_1}-
\rho_a\omega^2\boldsymbol{\nu}\cdot \boldsymbol{u}_h|_{K_2}
 &\quad\text{for} ~ e\subset K_1\in \mathcal{M}_h|_{\Omega}\\
 -p_h\boldsymbol{\nu}|_{K_1}-
\boldsymbol\sigma(\boldsymbol{u}_h)\cdot\boldsymbol\nu|_{K_2}
 &\quad\text{for} ~ e\subset K_2\in \mathcal{M}_h|_{\Omega_s}
 \end{cases},
\end{equation}
For any $K\in\mathcal{M}_h$, we define the local error estimator $\eta_K$ as
\[
\eta_K:=\Biggl(\|h_KR_K\|_{L^2(K)}^2+\frac{1}{2}\sum_{e\subset\partial
K\setminus\Gamma_s}h_e\|J_e\|_{L^2(e)}^2 + \sum_{e\subset\partial
K\cap\Gamma_s}h_e\|J_{e}\|_{L^2(e)}^2\Biggr)^{1/2}.
\]

\begin{theo}\label{thmp}
There exists a constant $C>0$ depending only on $\gamma_1$ and the minimum angle
of the mesh $\mathcal{M}_h$ such that the following a posterior error estimate
holds
\begin{align*}
&\|p-p_h\|_{H^1(\Omega_a)}+\|\boldsymbol{u}-\boldsymbol{u}_h\|_{H^1(\Omega_s)^3}
\le C\hat C^{-1}\alpha_0^3(1+\kappa L)\Biggl(\sum_{K\in\mathcal{M}_h}
\eta_K^2\Biggr)^{1/2} \notag \\
&+C\hat C^{-1}\alpha_0^3(1+\kappa L)^3 e^{-\gamma_1 \kappa\sigma}\|p_h-
p^{\rm inc}\|_{H^{1/2}(\partial B)} +C\hat C^{-1}\alpha_0^3(1+\kappa L)^2 \|
p^{\rm inc}- I_h p^{\rm inc}\|_{H^{1/2}(\Gamma)}.
\end{align*}
\end{theo}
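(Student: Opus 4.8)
The plan is to combine the inf-sup stability \eqref{infsup} of the sesquilinear form $a$ with the error representation formula \eqref{err-rep}, and then to bound separately each of the four terms on the right-hand side of \eqref{err-rep}. Since $\|p-p_h\|_{H^1(\Omega_a)}+\|\boldsymbol u-\boldsymbol u_h\|_{H^1(\Omega_s)^3}$ is equivalent to $\|(p-p_h,\boldsymbol u-\boldsymbol u_h)\|_{\boldsymbol X}$ and the restriction of $(p_h,\boldsymbol u_h)$ to $\Omega_a\cup\Omega_s$ lies in $\boldsymbol X$, applying \eqref{infsup} to the error gives
\[
\gamma_0\|(p-p_h,\boldsymbol u-\boldsymbol u_h)\|_{\boldsymbol X}\le\sup_{0\ne(\varphi,\boldsymbol v)\in\boldsymbol X}\frac{|a(p-p_h,\boldsymbol u-\boldsymbol u_h;\varphi,\boldsymbol v)|}{\|(\varphi,\boldsymbol v)\|_{\boldsymbol X}}.
\]
It therefore suffices to bound the numerator, uniformly in the test pair $(\varphi,\boldsymbol v)$, by the right-hand side of the theorem times $\|(\varphi,\boldsymbol v)\|_{\boldsymbol X}$.

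Fix $(\varphi,\boldsymbol v)\in\boldsymbol X$, let $\tilde\varphi\in H^1(\Omega)$ be its extension defined by \eqref{5.1a}--\eqref{5.1b}, and choose $\tilde\varphi_h\in V_{\Gamma,h}$ and $\boldsymbol v_h\in\boldsymbol U_h$ to be Scott--Zhang quasi-interpolants of $\tilde\varphi$ and $\boldsymbol v$; the operator is taken so that $\tilde\varphi_h$ vanishes on $\Gamma$ and is $H^1$-stable, and so that the standard local approximation bounds $\|\tilde\varphi-\tilde\varphi_h\|_{L^2(K)}\le Ch_K|\tilde\varphi|_{H^1(\tilde K)}$ and $\|\tilde\varphi-\tilde\varphi_h\|_{L^2(e)}\le Ch_e^{1/2}|\tilde\varphi|_{H^1(\tilde e)}$ hold on the element and face patches $\tilde K,\tilde e$ (together with their vector analogues for $\boldsymbol v$). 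Substituting this choice into \eqref{err-rep} splits $a(p-p_h,\boldsymbol u-\boldsymbol u_h;\varphi,\boldsymbol v)$ into (i) the residual part $\int_\Omega g(\bar{\tilde\varphi}_h-\bar{\tilde\varphi})-b(p_h,\boldsymbol u_h;\tilde\varphi-\tilde\varphi_h,\boldsymbol v-\boldsymbol v_h)$, (ii) the PML boundary-operator term on $\partial B$, and (iii) the interface interpolation term on $\Gamma$.

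For the residual part (i), write $\psi=\tilde\varphi-\tilde\varphi_h$ and $\boldsymbol w=\boldsymbol v-\boldsymbol v_h$ and integrate $b(p_h,\boldsymbol u_h;\psi,\boldsymbol w)$ by parts element by element over $\mathcal M_h|_\Omega$ and $\mathcal M_h|_{\Omega_s}$. The interior volume contributions, combined with the $g$ integral, assemble into $\sum_K\int_K R_K\bar\psi$ over $\Omega$ and $\sum_K\int_K R_K\cdot\bar{\boldsymbol w}$ over $\Omega_s$, with the residuals $R_K$ of \eqref{res}; the inter-element boundary contributions reassemble into the jump residuals $J_e$ of \eqref{jmp}; and on $\Gamma_s$ the acoustic boundary term together with the coupling integral $\int_{\Gamma_s}\rho_a\omega^2(\boldsymbol n_1\cdot\boldsymbol u_h)\bar\psi$, and the elastic boundary term together with $\int_{\Gamma_s}(p_h\boldsymbol n_1)\cdot\bar{\boldsymbol w}$, reassemble into the two components of the mixed jump $J_e$ of \eqref{jmp-s}. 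Applying the Cauchy--Schwarz inequality elementwise, inserting the approximation bounds above, and summing with the finite-overlap property of the patches bound (i) by $C(\sum_{K\in\mathcal M_h}\eta_K^2)^{1/2}(\|\nabla\tilde\varphi\|_{L^2(\Omega)}+\|\boldsymbol v\|_{H^1(\Omega_s)^3})$. The extension estimate \eqref{est-ext1} controls $\|\nabla\tilde\varphi\|_{L^2(\Omega_{\rm PML})}$ by $C\hat C^{-1}\alpha_0(1+\kappa L)\|\varphi\|_{H^{1/2}(\partial B)}$, and the trace theorem bounds $\|\varphi\|_{H^{1/2}(\partial B)}\le C\|\varphi\|_{H^1(\Omega_a)}\le C\|(\varphi,\boldsymbol v)\|_{\boldsymbol X}$; since $\alpha_0\ge1$, bounding $\alpha_0$ by $\alpha_0^3$ yields the first term of the estimate.

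The PML term (ii) equals $\int_{\partial B}(\mathscr T-\mathscr T^{\rm PML})(p_h-p^{\rm inc})\bar\varphi\,{\rm d}s$, which Lemma \ref{boe} bounds by $C\alpha_0^3(1+\kappa L)^3e^{-\gamma_1\kappa\sigma}\|p_h-p^{\rm inc}\|_{L^2(\partial B)}\|\varphi\|_{L^2(\partial B)}$; using $\|\cdot\|_{L^2(\partial B)}\le\|\cdot\|_{H^{1/2}(\partial B)}$ and the trace bound $\|\varphi\|_{L^2(\partial B)}\le C\|(\varphi,\boldsymbol v)\|_{\boldsymbol X}$ gives the second term. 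The interface term (iii) equals $\int_\Gamma(A\nabla\bar{\tilde\varphi}\cdot\boldsymbol n_3)(p^{\rm inc}-I_h p^{\rm inc})\,{\rm d}s$ and is estimated by the duality pairing $\|A\nabla\bar{\tilde\varphi}\cdot\boldsymbol n_3\|_{H^{-1/2}(\Gamma)}\|p^{\rm inc}-I_h p^{\rm inc}\|_{H^{1/2}(\Gamma)}$, whose first factor the extension estimate \eqref{est-ext2} controls by $C\hat C^{-1}\alpha_0^3(1+\kappa L)^2\|\varphi\|_{H^{1/2}(\partial B)}$; a final trace bound produces the third term. Collecting the three bounds and dividing by $\|(\varphi,\boldsymbol v)\|_{\boldsymbol X}$ completes the proof. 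I expect the main obstacle to be the element-by-element integration by parts in (i), in particular the bookkeeping on $\Gamma_s$: the two coupling integrals must be distributed correctly so as to reassemble into the scalar and vector components of the jump $J_e$ in \eqref{jmp-s}, while the quasi-interpolant must simultaneously respect the homogeneous condition on $\Gamma$ (so that $\tilde\varphi_h\in V_{\Gamma,h}$) and remain $H^1$-stable throughout $\Omega$, where \eqref{est-ext1} is the only available control of $\tilde\varphi$ in the PML layer.
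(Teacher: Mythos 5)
Your proposal is correct and takes essentially the same approach as the paper's own proof: Scott--Zhang interpolants substituted into the error representation formula \eqref{err-rep}, elementwise integration by parts to assemble the residuals \eqref{res} and jump terms \eqref{jmp}--\eqref{jmp-s}, the boundary-operator bound of Lemma \ref{boe} together with the extension estimates \eqref{est-ext1}--\eqref{est-ext2} for the three resulting pieces, and the inf-sup condition \eqref{infsup} to pass from the sesquilinear form back to the $\boldsymbol{X}$-norm of the error. The only differences are cosmetic: you invoke \eqref{infsup} at the outset rather than at the end, and you make explicit the $\Gamma_s$ bookkeeping and the $\|\boldsymbol{v}\|_{H^1(\Omega_s)^3}$ contribution that the paper leaves implicit.
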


\begin{proof}
Let $\Pi_h:H^1_{\Gamma}(\Omega)\to V_{\Gamma,h}$
and $\boldsymbol{\Pi}_h:H^1(\Omega_s)^3\to \boldsymbol{U}_h$ be Scott--Zhang
\cite{sz-mc90} interpolation operators satisfying the following interpolation
estimates: For any $\varphi\in H^1(\Omega)$ and
$\boldsymbol{v}\in H^1(\Omega_s)^3$,
\begin{equation}\label{pi-p}
 \begin{cases}
\|\varphi-\Pi_h\varphi\|_{L^2(K)}\le Ch_K \|\nabla\varphi\|_{L^2(\tilde K)^3}\\
\|\varphi-\Pi_h\varphi\|_{L^2(e)}\le Ch_K^{1/2} \|\nabla\varphi\|_{L^2(\tilde
e)^3}
 \end{cases}
 \quad\text{for} ~ K\in \mathcal{M}_h|_{\Omega}
\end{equation}
and
\begin{equation}\label{pi-u}
 \begin{cases}
  \|\boldsymbol{v}-\boldsymbol{\Pi}_h\boldsymbol{v}\|_{L^2(K)}\le Ch_K
\|\nabla\boldsymbol{v}\|_{L^2(\tilde K)^{3\times 3}}\\
\|\boldsymbol{v}-\boldsymbol{\Pi}_h\boldsymbol{v}\|_{L^2(e)}\le Ch_K^{1/2}
\|\nabla\boldsymbol{v}\|_{L^2(\tilde e)^{3\times 3}}
 \end{cases}
 \quad\text{for} ~ K\in \mathcal{M}_h|_{\Omega_s},
\end{equation}
where $\tilde K$ and $\tilde e$ are the union of all elements in $\mathcal{M}_h$
having a non-empty intersection with $K\in\mathcal{M}_h$ and the side $e$,
respectively.

Taking $\tilde\varphi_h=\Pi_h\tilde\varphi\in V_{\Gamma,h}$ and
$\boldsymbol{v}_h= \boldsymbol{\Pi}_h\boldsymbol{v}\in\boldsymbol{U}_h$ in the
error representation formula \eqref{err-rep}, we get 
\begin{align}\label{I4}
&a(p-p_h, \boldsymbol{u}-\boldsymbol{u}_h;\varphi,\boldsymbol{v})\notag\\
=&\int_{\Omega}g(\overline{\Pi_h\tilde\varphi-\tilde\varphi}){\rm d}\boldsymbol
x -b(p_h,\boldsymbol{u}_h;\tilde\varphi-\Pi_h\tilde\varphi,\boldsymbol{v}
-\boldsymbol{\Pi}_h\boldsymbol{v})\notag\\
&-\int_{\partial B}(\mathscr{T}-\mathscr{T}^{\rm PML})(p_h-p^{\rm
inc})\bar\varphi{\rm d}s-\int_{\Gamma}(A\nabla\bar{\tilde{\varphi}}
\cdot\boldsymbol{n}_3)(p^{\rm inc}-I_h p^{\rm inc}){\rm d}s \notag\\
=&I_1+I_2+I_3+I_4.
\end{align}
It follows from the integration by parts and \eqref{res}--\eqref{jmp-s} that 
\begin{align*}
I_1+I_2=&\sum_{K\in\mathcal{M}_h|_{\Omega}}\Biggl(\int_K
R_K(\overline{\tilde\varphi-\Pi_h\tilde\varphi}){\rm d}\boldsymbol x
+\frac{1}{2}\sum_{e\subset\partial K\backslash\Gamma_s}\int_{e}
J_e(\overline{\tilde\varphi-\Pi_h\tilde\varphi}){\rm d}s\\
&+\sum_{e\subset\partial K\cap\Gamma_s}\int_{e}
J_{e}(\overline{\tilde\varphi-\Pi_h\tilde\varphi}){\rm d}s\Biggr)
+\sum_{K\in\mathcal{M}_h|_{\Omega_s}}\Biggl(\int_K
R_K\cdot(\overline{\boldsymbol{v}-\boldsymbol{\Pi}_h\boldsymbol{v}}){\rm
d}\boldsymbol x\\
&+\frac{1}{2}\sum_{e\subset\partial K\backslash\Gamma_s}\int_{e}
J_e\cdot(\overline{\boldsymbol{v}-\boldsymbol{\Pi}_h\boldsymbol{v}}){\rm d}s
+\sum_{e\subset\partial K\cap\Gamma_s}\int_{e}
J_{e}\cdot(\overline{\boldsymbol{v}-\boldsymbol{\Pi}_h\boldsymbol{v}}){\rm
d}s\Biggr).
\end{align*}
By \eqref{pi-p}--\eqref{pi-u} and the estimate \eqref{est-ext1}, we have
\begin{align*}
|I_1+I_2|&\le C\Biggl(\sum_{K\in\mathcal{M}_h}\eta_K^2\Biggr)^{1/2}
\|\nabla\tilde\varphi\|_{L^{ 2 }(O)}\\
&\le C\hat C^{-1}\alpha_0^3(1+\kappa
L)\Biggl(\sum_{K\in\mathcal{M}_h}\eta_K^2\Biggr)^{1/2}
\|\varphi\|_{H^{1/2}(\partial B)}.
\end{align*}
By Lemma \ref{boe}, we have
\begin{equation*}
|I_3|\le C\hat C^{-1}\alpha_0^3(1+\kappa L)^3e^{-k\gamma_1\sigma}\|p_h-
p^{\rm inc}\|_{H^{1/2}(\partial B)}\|\varphi\|_{H^{1/2}(\partial B)}.
\end{equation*}
It follows from \eqref{est-ext2} that 
\begin{equation*}
|I_4|\le C\hat C^{-1}\alpha_0^3(1+\kappa L)^2\|\varphi\|_{H^{1/2}(\partial
B)}\|p^{\rm inc}-I_h p^{\rm inc}\|_{H^{1/2}(\Gamma)}.
\end{equation*}
The proof is completed by using the above estimates in \eqref{I4} and the
inf-sup condition \eqref{infsup}.
\end{proof}

\section{Numerical experiments}

According to the discussion in section 4, we choose the PML medium property
as the power function and need to specify the thickness $d_j$ of the layers and
the medium parameter $\sigma$. It is clear to note from Theorem \ref{thmp} that
the a posteriori error estimate consists of two parts: the PML error
$\epsilon_{\rm PML}$ and the finite element discretization error
$\epsilon_{\rm FEM}$, where
\begin{align}
\label{eta:fem}&\epsilon_{\rm FEM}=  \Biggl(\sum_{K\in\mathcal{M}_h}
\eta_K^2\Biggr)^{1/2} + \| p^{\rm inc}- I_h p^{\rm inc}\|_{H^{1/2}(\Gamma)},\\
\label{eta:pml}&\epsilon_{\rm PML}=\alpha_0^3(1+\kappa L)^3 e^{-\gamma_1
\kappa \sigma}\|p_h- p^{\rm inc}\|_{H^{1/2}(\partial B)}.
\end{align}
In our implementation, we first choose $d_j$ and $\sigma$ such that
$\alpha_0^3(1+\kappa L)^3 e^{-\gamma_1 \kappa \sigma}\le 10^{-8}$, which
makes the PML error \eqref{eta:pml} negligible compared with the finite element
discretization error \eqref{eta:fem}. Once the PML region and the medium
property are fixed, we use the standard finite element adaptive strategy to
modify the mesh according to the a posteriori error estimate. For any
$K\in\mathcal{M}_h$, we define the local a posteriori error estimator
\[
\hat\eta_K = \eta_K + \|p^{\rm inc}- I_h p^{\rm inc}\|_{H^{1/2}(\Gamma\cap\partial K)}.
\]
The adaptive FEM algorithm is summarized in Table \ref{alg}.

\begin{table}
\caption{The adaptive FEM algorithm.}
\vskip -15pt
\hrulefill

\begin{tabular}{ll}
 1 & Given a tolerance $\epsilon > 0$ and mesh refinement threshold $\tau\in
(0,1)$;\\
 2 & Choose $d_j$ and $\sigma$ such that $\alpha_0^3(1+\kappa L)^3 e^{-\gamma_1
\kappa\sigma}< 10^{-8}$;\\
 3 & Construct an initial tetrahedral partition $\mathcal{M}_h$ over $D$ and
compute error estimators;\\
 4 &  While $\epsilon_h>\epsilon$ do\\
 5 & \qquad choose  $\hat{\mathcal{M}}_h\subset\mathcal{M}_h$ according to the
strategy $\eta_{\hat{\mathcal{M}}_h}>\tau\eta_{\mathcal{M}_h}$;\\
 6 & \qquad refine all the elements in $\hat{\mathcal{M}}_h$ and obtain a new
mesh denoted still by $\mathcal{M}_h$;\\
 7 & \qquad solve the discrete problem \eqref{fem-twp} on the new mesh
$\mathcal{M}_h$;\\
 8 & \qquad compute the corresponding error estimators;\\
 9 & End while.
\end{tabular}

\hrulefill
\label{alg}
\end{table}

In the following, we present two examples to demonstrate the competitive
numerical performance of the proposed algorithm. The first-order linear element
is used for solving the problem. Our implementation is based on parallel
hierarchical grid (PHG) \cite{phg}, which is a toolbox for developing parallel
adaptive finite element programs on unstructured tetrahedral meshes. The linear
system resulted from finite element discretization is solved by the PCG solver.

{\bf Example 1.} We consider a problem with an exact solution. We set
the elastic region $\Omega_s:=B(0,0.2)$ and the acoustic region
$\Omega_a:=B(0,0.5)\setminus\bar\Omega_s$, where $B(0,R):=\{\boldsymbol{x}\in
\mathbb{R}^3: |\boldsymbol{x}|<R\}$ denotes the
ball with radius $R>0$ and centering at the origin. Let 
\begin{equation}\label{ex:s}
p(\boldsymbol x)=\frac{e^{{\rm i} \kappa |\boldsymbol x-\boldsymbol
x_0|}}{|\boldsymbol x-\boldsymbol x_0|} \quad\text{and}\quad
\boldsymbol{u}(\boldsymbol x)=\omega^2\nabla p(\boldsymbol x),
\end{equation}
where $\boldsymbol x_0=(1, 0, 0)^\top$. The parameters are chosen as
$\kappa=1$, $\omega=1$, $\lambda=0.5$, $\mu=0.25$, and $\rho_a=1$ such that
\begin{equation}\label{af}
\kappa^2(\lambda+2\mu)=\omega^2.
\end{equation}

First it is easy to verify that 
\[
\Delta p+\kappa^2 p =0\quad\text{in} ~ \Omega_a.
\]
When $\mu$ and $\lambda$ are constants, the Navier equation \eqref{une} reduces
to 
\begin{equation}\label{cne}
 \mu\Delta\boldsymbol u+(\lambda+\mu)\nabla\nabla\cdot\boldsymbol
u+\omega^2\boldsymbol u=0\quad\text{in}~ \Omega_s.
\end{equation}
Using \eqref{ex:s} and \eqref{af}, we have from a straightforward calculation
that
\begin{align*}
\mu\Delta\boldsymbol u+(\lambda+\mu)\nabla\nabla\cdot\boldsymbol
u+\omega^2\boldsymbol u &=\omega^2\left(\mu\nabla\cdot\nabla(\nabla p) +
(\lambda+\mu)\nabla (\Delta p)+\omega^2\nabla p\right)\\
&=\omega^2\left(\mu\nabla\cdot\nabla(\nabla p) -\kappa^2 (\lambda+\mu)\nabla
p+\omega^2\nabla p\right)\\
&=\omega^2\left(-\kappa^2\mu (\nabla p) -\kappa^2 (\lambda+\mu)\nabla
p+\omega^2\nabla p\right)\\
&=\omega^2\left(-\kappa^2(\lambda+2\mu)+\omega^2\right)\nabla p=0.
\end{align*}
which shows that $\boldsymbol{u}=\omega^2\nabla p$ satisfies \eqref{cne} in
$\Omega_s$. It can be verified that the interface conditions
\eqref{inc1}--\eqref{inc2} are also satisfied by letting $\rho_a=1$.

Let $q=p|_{\partial B(0, 0.5)}$ and consider the following acoustic-elastic
interaction problem with the Dirichlet boundary condition:
\[
 \begin{cases}
  \Delta p+\kappa^2 p=0&\quad\text{in} ~ B(0, 0.5)\setminus\bar{B}(0, 0.2),\\
  \mu\Delta\boldsymbol u+(\lambda+\mu)\nabla\nabla\cdot\boldsymbol
u+\omega^2\boldsymbol u=0&\quad\text{in} ~ B(0, 0.2),\\
p=q&\quad\text{on} ~ \partial B(0, 0.5).
 \end{cases}
\]
We may test the adaptive FEM algorithm by solving the above boundary value
problem. 

Figure \ref{ex1:err} displays the errors of $p$ and $\boldsymbol u$ against
the number of nodal points $N_p$ in $B(0, 0.5)\setminus\bar{B}(0, 0.2)$
and $N_{\boldsymbol u}$ in $B(0, 0.2)$, respectively. It clearly shows that the
adaptive FEM yields quasi-optimal convergence rates, i.e., 
\[
\|p-p_h\|_{H^1(\Omega_a)}=O(N_p^{-1/3}),\quad \eta_{p,h}=O( 
N_p^{-1/3})
\]
and
\[
\|\boldsymbol{u}-\boldsymbol{u}_h\|_{\boldsymbol{H}^1(\Omega_s)}=O(
N_{\boldsymbol{u}}^{-1/3}), \quad \eta_{\boldsymbol{u},h}=O(
N_{\boldsymbol{u}}^{-1/3}),
\]
where $\eta_{p,h}$ and $\eta_{\boldsymbol{u},h}$ are the a posterior error
estimators for $p$ and $\boldsymbol u$, respectively. Figure \ref{ex1:meshp}
plots the adaptive mesh of $\Omega_a$ for solving $p_h$ and Figure \ref{ex1:sp}
plots the mesh on a cross section of the domain $\Omega_a$ on the $xz$-plane.
Figure \ref{ex1:meshu} plots the adaptive mesh of $\Omega_s$ for solving
$\boldsymbol{u}_h$ and Figure \ref{ex1:su} plot the mesh on the cross section of
the domain $\Omega_s$ on the $xz$-plane.

\begin{figure}
\includegraphics[width=0.8\textwidth]{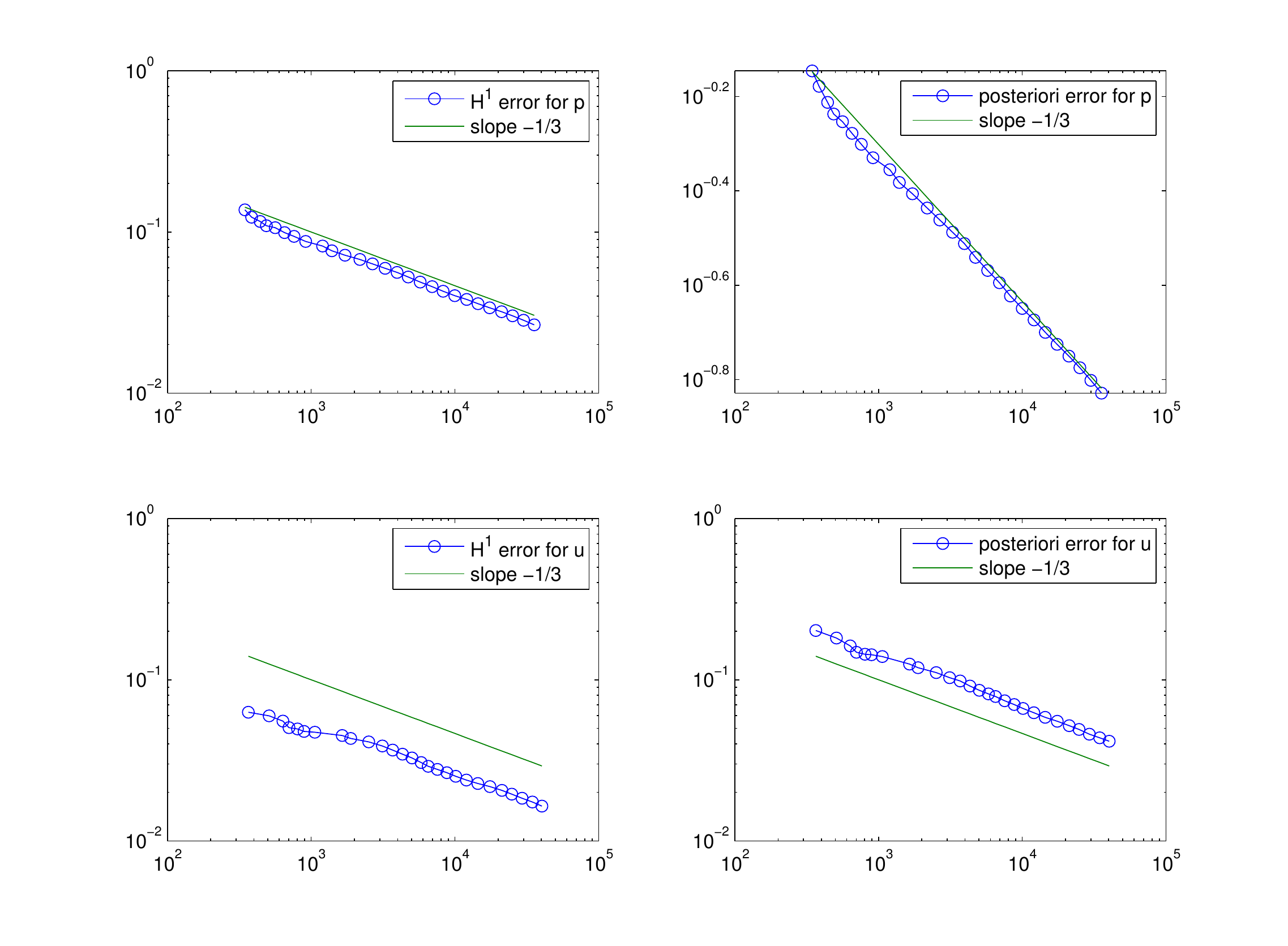}
\caption{Example 1: Quasi-optimality of $H^1$- error estimates and the a
posteriori error estimates.}\label{ex1:err}
\end{figure}

\begin{figure}
\includegraphics[width=0.5\textwidth]{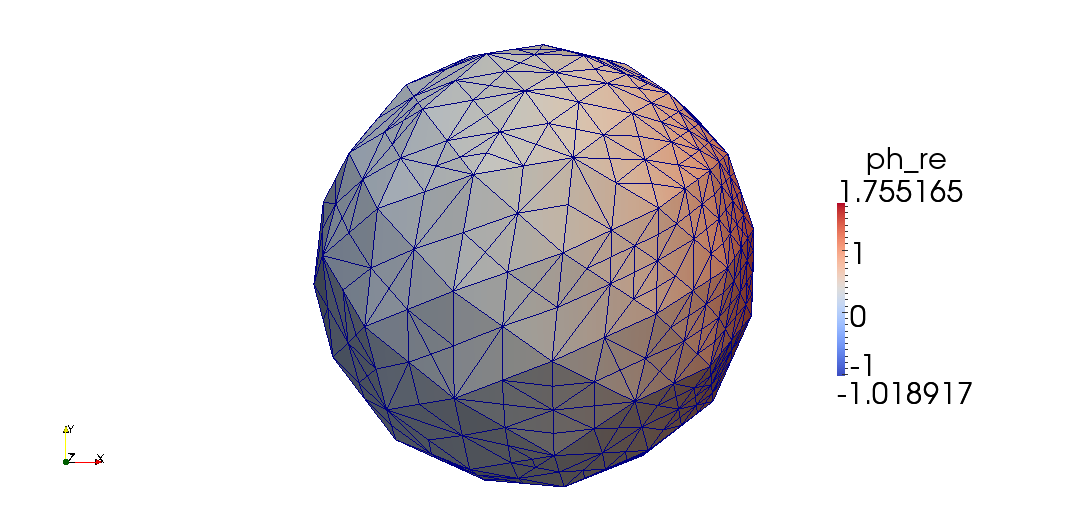}
\caption{Example 1: An adaptive mesh with 20390 elements of
$\Omega_a$.}\label{ex1:meshp}
\end{figure}

\begin{figure}
\includegraphics[width=0.5\textwidth]{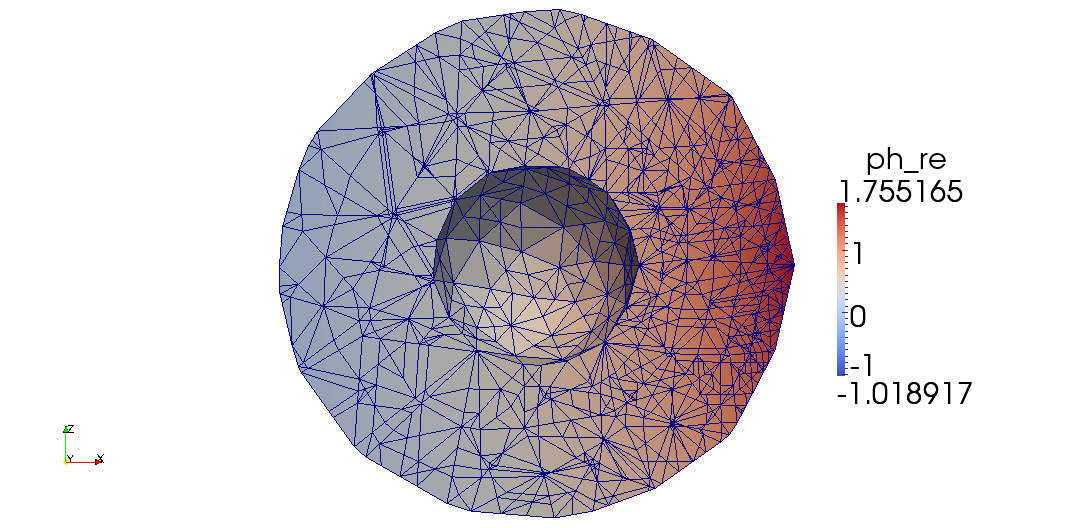}
\caption{Example 1: The cross section of the mesh in Figure \ref{ex1:meshp} on
the $xz$-plane.}\label{ex1:sp}
\end{figure}

\begin{figure}
\includegraphics[width=0.5\textwidth]{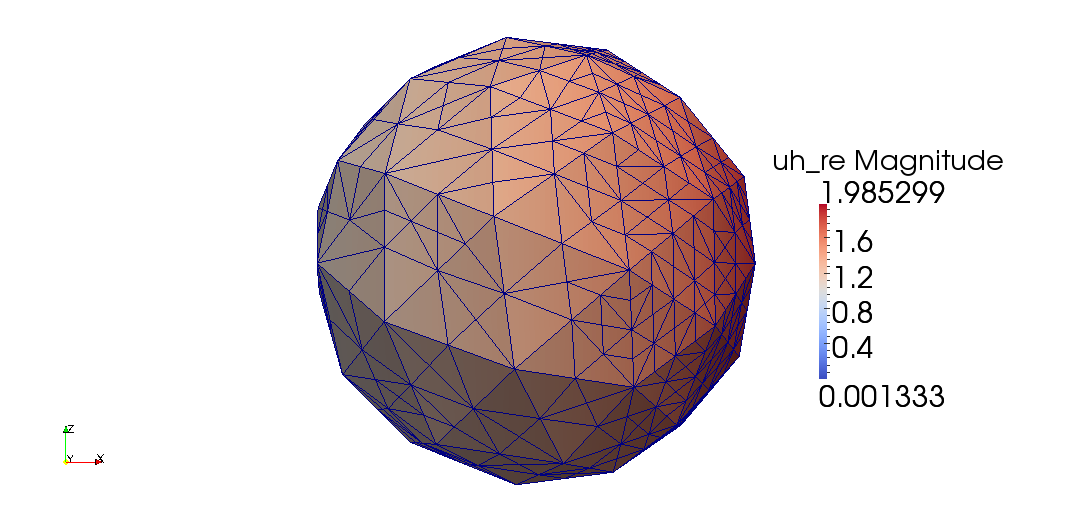}
\caption{Example 1: An adaptive mesh with 7655 elements of
$\Omega_s$.}\label{ex1:meshu}
\end{figure}

\begin{figure}
\includegraphics[width=0.5\textwidth]{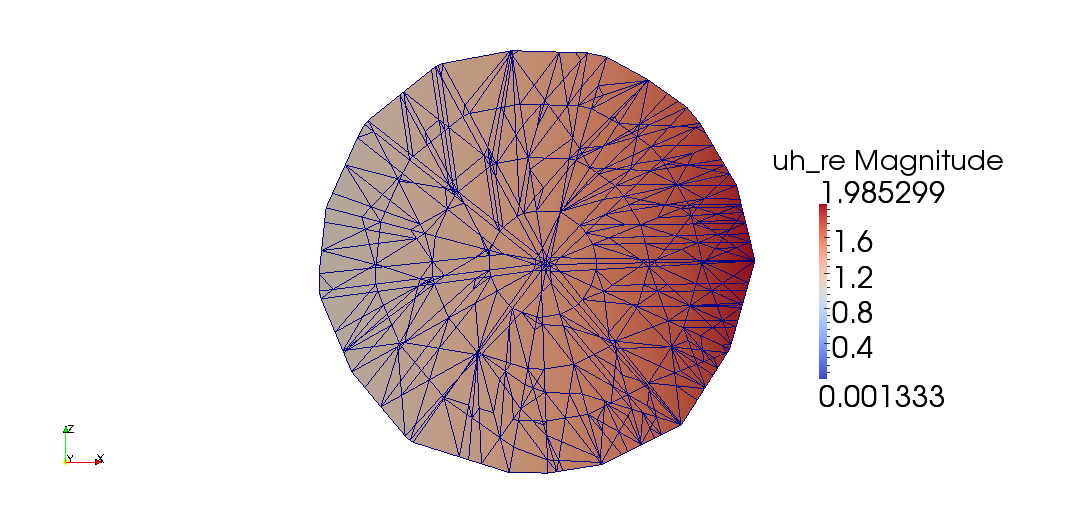}
\caption{Example 1: The cross section of the mesh in Figure \ref{ex1:meshu} on 
$xz$-plane.}\label{ex1:su}
\end{figure}

{\bf Example 2.} This example concerns the scattering of the incident plane wave
\[
p^{\rm inc}(\boldsymbol x)=e^{-{\rm i}\kappa x_3}. 
\]
The Dirichlet boundary condition on the PML layer outer boundary $\Gamma$ is set
by $p=p^{\rm inc}$. We choose $\kappa=2$, $\omega=2\pi$, $\lambda=1$, $\mu=2$,
and $\rho_a=1$. Let the elastic region and the acoustic region be $\Omega_s=
B_1\backslash\bar B_0$ and $\Omega_a = B_2 \backslash \bar \Omega_s$,
respectively. Here $B_0=(-0.1,0.1)\times(-0.1,0.1)\times (-0.2,0.0), B_1
=(-0.2,0.2)\times(-0.2,0.2)\times (-0.2,0.2)$, and
$B_2=[-0.6,0.6]\times[-0.6,0.6]\times[-0.6,0.6]$. The PML domain is $\Omega_{\rm
PML} = (0, 1)\times(0, 1)\times(0, 1)\setminus\bar{B}_2 $, i.e., the thickness
of the PML layer is 0.4 in each direction. In this example, the elastic solid
is a rectangular box with a small rectuangular dent on the surface. The
solutions of $p$ and $\boldsymbol u$ may have singularities around the corners
of the dent. We choose $\sigma = 16$ and $m = 2$ for the medium property to
ensure the PML error is negligible compared to the finite element error.

For this example, we set the numerical solution on the very fine mesh to be a
reference solution since there is no analytic solution. Figure \ref{ex2:err}
shows the errors of $p$ and $\boldsymbol u$ against the number of nodal points
$N_p$ and $N_{\boldsymbol u}$. It is clear to note that the FEM algorithm yields
a quasi-optimal convergence rate. The surface plots of the amplitude of the
fields are shown as follows: Figure \ref{ex2:p} shows the real part of $p_h$ for
the cross section in $\Omega_a$ on the $yz$-plane and Figure \ref{ex2:u} shows
the real part of $\boldsymbol{u}_h$ for the cross section in $\Omega_s$ on the
$yz$-plane.

\begin{figure}
\includegraphics[width=0.8\textwidth]{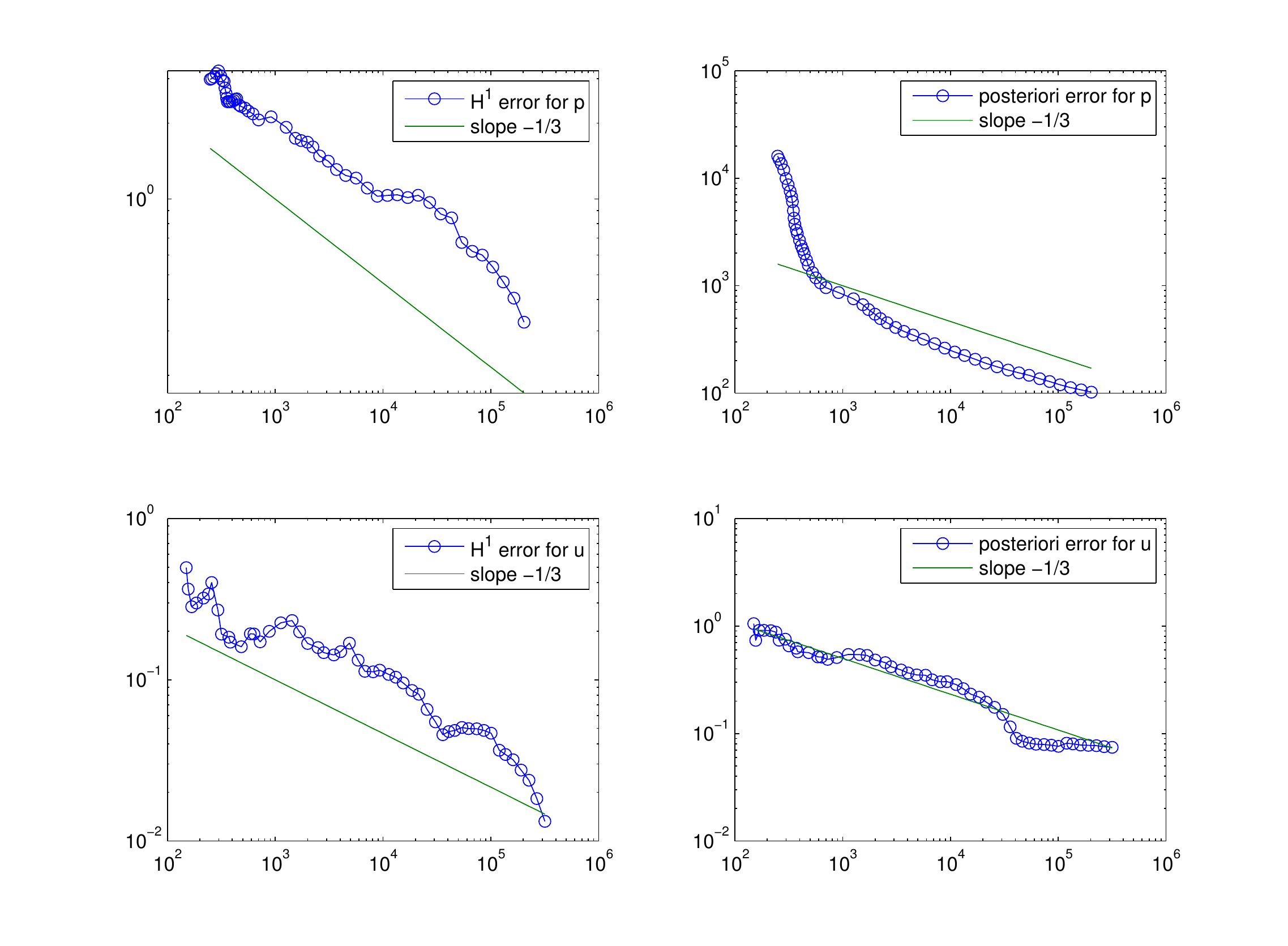}
\caption{Example 2: Quasi-optimality of $H^1$- error estimates and the a
posteriori error estimates.}\label{ex2:err}
\end{figure}

\begin{figure}
\includegraphics[width=0.5\textwidth]{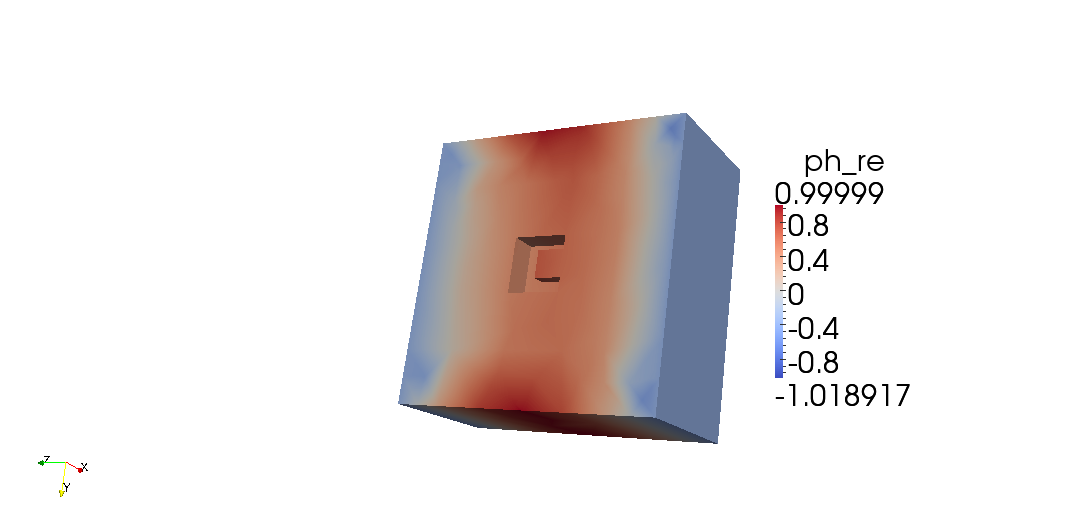}
\caption{Example 2: The amplitude of the real part of $p_h$ for the cross
section of $\Omega_a$ on the $yz$-plane.}\label{ex2:p}
\end{figure}

\begin{figure}
\includegraphics[width=0.5\textwidth]{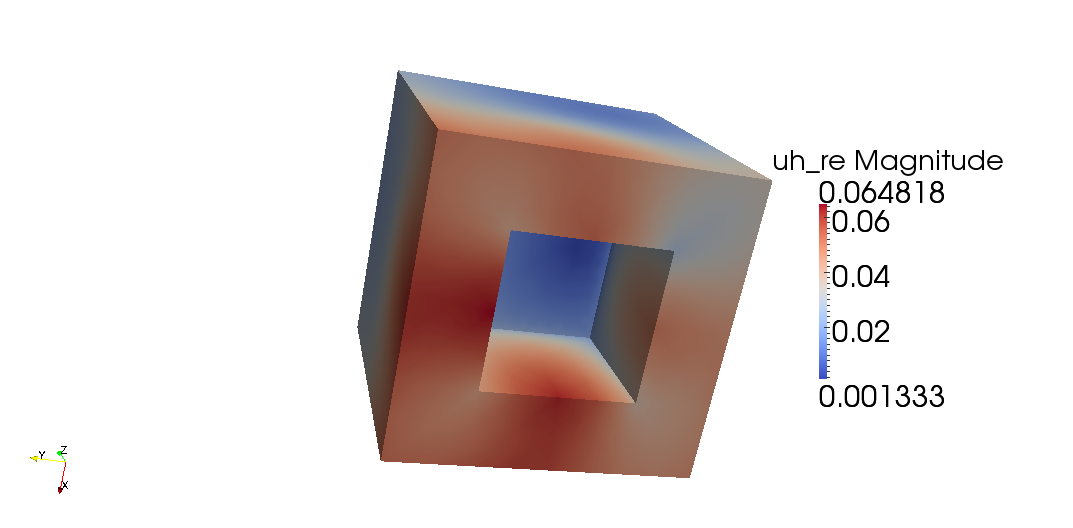}
\caption{Example 2: The amplitude of the real part of $\boldsymbol{u}_h$ for the
cross section of $\Omega_s$ on the $yz$-plane.}\label{ex2:u}
\end{figure}

\section{Concluding remarks}

We have studied a variational formulation for the acoustic-elastic interaction
problem in $\mathbb{R}^3$ and adopted the PML to truncate the unbounded
physical domain. The scattering problem is reduced to a boundary value problem
by using transparent boundary conditions. We prove that the truncated PML
problem has a unique weak solution which converges exponentially to the solution
of the original problem by increasing the PML parameters. We incorporate the
adaptive mesh refinement with a posteriori error estimate for the finite element
method to handle the problem where the solution may have singularities.
Numerical results show that the proposed method is effective to solve the
acoustic-elastic interaction problem.

\end{document}